\documentclass[12pt]{article}
\usepackage{amsthm}
\usepackage{amsmath}
\usepackage[centerlast]{caption2}
\usepackage{amssymb}
\usepackage{latexsym}
\usepackage{graphicx}

\newtheorem{theorem}{Theorem}
\newtheorem{corollary}{Corollary}
\newtheorem{lemma}{Lemma}
\theoremstyle{remark}

\newtheorem{conjecture}{Conjecture}
\newtheorem{remark}{Remark}


\title{On the  Brannan's  conjecture}
\author{R\'obert Sz\'asz}

\date{}
\begin{document}
\maketitle

\setlength{\textheight}{240mm} \setlength{\textwidth}{135mm}
\addtolength{\topmargin}{-12.5mm}

\def\ds{\displaystyle}
\theoremstyle{definition}
\footnote{\textbf{Keywords:} Integral representation, Brannan's conjecture}
\newline
\footnote{\textbf{AMS \ classification: 30C50}\ }

\begin{abstract}We will prove the Brannan conjecture for particular values of the parameter.  The basic tool of the study is an integral representation  published  in a recent work \cite{3}.
\end{abstract}
\section{Introduction}
We consider the following   Mac-Laurin development
\begin{equation}\label{x9ed7za}
\frac{(1+xz)^\alpha}{(1-z)^\beta}=\sum_{n=0}^\infty{A_n(\alpha,\beta,x)}z^n,
\end{equation}
where $\alpha>0, \ \beta>0, \ x=e^{i\theta}, \ \theta\in[-\pi,\pi],$
and   $z\in{U.}$    It is easily seen, that the radius of convergence of the series  (\ref{x9ed7za})  is equal to $1.$
In \cite{5} the author conjectured, that  if   $\alpha>0, \ \beta>0$  and    $|x|=1,$    then    $$|A_{2n-1}(\alpha,\beta,x)|\leq{A_{2n-1}(\alpha,\beta,1)},$$
where   $n$   is a natural number.
Partial results regarding this question were  already    proved in   \cite{1}, \cite{2}, \cite{5},    \cite{8}.\\
The case $\beta=1,$     $\alpha\in(0,1)$  is still open. Regarding this case   partial     results     were  obtained   in \  \cite{3},  \cite{4}, \cite{6},  \cite{7}.
We will  prove some partial results regarding  the case $\beta=1,$  and   $\alpha\in(0,1).$  In order to do this we will use an integral representation   proved in \ \cite{3},  and     the fact that the conjecture   was   proved for $2n-1\leq51$     in \cite{6}.

\section{Preliminaries}

We inroduce the notation:
$$A_n(\alpha,1,x)=A_n(\alpha,x).$$
It is easily seen that
\begin{eqnarray}\label{n8m9nn}
A_{2n-1}(\alpha,x)=\sum_{k=0}^{2n-1}\frac{(-\alpha)_k(-x)^k}{k!}=1+\frac{\alpha}{1!}x-\frac{\alpha(1-\alpha)}{2!}x^2+\frac{\alpha(1-\alpha)(2-\alpha)}{3!}x^3\\-\frac{\alpha(1-\alpha)(2-\alpha)(3-\alpha)}{4!}x^4+\ldots+
\frac{\alpha(1-\alpha)(2-\alpha)\ldots(2n-2-\alpha)}{(2n-1)!}x^{2n-1}.\nonumber
\end{eqnarray}
We denote $$B(t,\theta)=\frac{\cos\theta+t+t^{2n-1}\cos2n\theta+t^{2n}\cos(2n-1)\theta}{1+t^2+2t\cos\theta},$$
 $$C(t,\theta)=\frac{\sin\theta+t^{2n-1}\sin2n\theta+t^{2n}\sin(2n-1)\theta}{1+t^2+2t\cos\theta}.$$
We need the following lemmas in our study.
\begin{lemma}
For  $\alpha\in(0,1)$  the following equality holds:
\begin{eqnarray}
\Phi(\theta)=-\Gamma(\alpha)\Gamma(-\alpha)A_{2n-1}(\alpha,e^{i\theta})=\int_0^1F(t)\Big[\frac{1}{\alpha}+\sum_{k=1}^{2n-1}(-t)^{k-1}e^{ki\theta}\Big]dt\nonumber\\=\int_0^1F(t)\Big[\frac{1}{\alpha}+B(t,\theta)+iC(t,\theta)\Big]dt,\nonumber
\end{eqnarray}
where  $F'(t)=-t^{-1-\alpha}(1-t)^{\alpha-1},$   and   $F(1)=0.$
\end{lemma}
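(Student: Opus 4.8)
The plan is to prove the two equalities separately, both by direct computation. For the first equality I would start from the expansion (\ref{n8m9nn}), whose $k$-th coefficient ($k\ge1$) is $\frac{(-\alpha)_k(-x)^k}{k!}$, and recover each monomial $x^k$ from the moment $\int_0^1 F(t)t^{k-1}\,dt$. Before that I would record the behaviour of $F$ near the endpoints: since $F'(t)=-t^{-1-\alpha}(1-t)^{\alpha-1}$ and $F(1)=0$, we have $F(t)=\int_t^1 s^{-1-\alpha}(1-s)^{\alpha-1}\,ds$, which is finite for $t\in(0,1)$, tends to $0$ as $t\to1^-$, and satisfies $F(t)\sim\frac1\alpha t^{-\alpha}$ as $t\to0^+$ (because $(1-s)^{\alpha-1}\to1$). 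Hence $F(t)t^{k-1}$ and $F(t)/\alpha$ are integrable on $(0,1)$ for every $k\ge1$ (using $\alpha\in(0,1)$), so all the integrals in the statement make sense.

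The computational core is an integration by parts. Writing $\int_0^1 F(t)t^{k-1}\,dt=\big[F(t)\tfrac{t^k}{k}\big]_0^1-\tfrac1k\int_0^1 t^k F'(t)\,dt$, the integrated terms vanish (at $t=1$ since $F(1)=0$, at $t=0$ since $F(t)t^k=O(t^{k-\alpha})\to0$), and what remains is a Beta integral:
$$\int_0^1 F(t)t^{k-1}\,dt=\frac1k\int_0^1 t^{k-1-\alpha}(1-t)^{\alpha-1}\,dt=\frac1k\cdot\frac{\Gamma(k-\alpha)\Gamma(\alpha)}{\Gamma(k)}=\frac{(-\alpha)_k\,\Gamma(\alpha)\Gamma(-\alpha)}{k!},$$
where I use $\Gamma(k-\alpha)=(-\alpha)(1-\alpha)\cdots(k-1-\alpha)\,\Gamma(-\alpha)=(-\alpha)_k\Gamma(-\alpha)$. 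The same computation with the antiderivative $t$ in place of $t^k/k$ gives $\int_0^1 F(t)\,dt=\Gamma(1-\alpha)\Gamma(\alpha)=-\alpha\,\Gamma(\alpha)\Gamma(-\alpha)$, so $\int_0^1\frac{F(t)}{\alpha}\,dt=-\Gamma(\alpha)\Gamma(-\alpha)$, which is exactly $-\Gamma(\alpha)\Gamma(-\alpha)$ times the $k=0$ term of (\ref{n8m9nn}). Multiplying the $k$-th identity by $(-1)^{k-1}e^{ki\theta}$, summing over $k=1,\dots,2n-1$, and using $(-1)^{k-1}(-\alpha)_k=-(-1)^k(-\alpha)_k$, the sum reproduces $-\Gamma(\alpha)\Gamma(-\alpha)\sum_{k=1}^{2n-1}\frac{(-\alpha)_k(-e^{i\theta})^k}{k!}$; adding the $k=0$ contribution yields $\Phi(\theta)=\int_0^1 F(t)\big[\frac1\alpha+\sum_{k=1}^{2n-1}(-t)^{k-1}e^{ki\theta}\big]\,dt$.

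For the second equality it only remains to sum the finite geometric progression: with $w=-te^{i\theta}$,
$$\sum_{k=1}^{2n-1}(-t)^{k-1}e^{ki\theta}=e^{i\theta}\sum_{m=0}^{2n-2}w^m=e^{i\theta}\cdot\frac{1-w^{2n-1}}{1-w}=\frac{e^{i\theta}+t^{2n-1}e^{2ni\theta}}{1+te^{i\theta}},$$
using that $2n-1$ is odd. Multiplying numerator and denominator by $1+te^{-i\theta}$ turns the denominator into $1+t^2+2t\cos\theta$ and the numerator into $e^{i\theta}+t+t^{2n-1}e^{2ni\theta}+t^{2n}e^{(2n-1)i\theta}$; separating real and imaginary parts gives precisely $B(t,\theta)$ and $C(t,\theta)$, so $\sum_{k=1}^{2n-1}(-t)^{k-1}e^{ki\theta}=B(t,\theta)+iC(t,\theta)$, and since $F$ is real-valued the second equality follows by linearity of the integral. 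I expect the main obstacle (such as it is) to be the endpoint analysis at $t=0$: rigorously justifying $F(t)\sim t^{-\alpha}/\alpha$, the vanishing of the integrated terms, and the legitimacy of the Beta-integral evaluation; everything else is bookkeeping. Alternatively, the first equality may simply be cited from \cite{3}, in which case only the elementary reduction of the geometric sum to $B$ and $C$ needs to be carried out here.
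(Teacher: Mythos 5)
Your argument is correct, and it is organized differently from the paper's. The paper does not re-derive the coefficient identities: it takes as its starting point the integral representation $A_n(\alpha,x)=1+\frac{1}{\Gamma(\alpha)\Gamma(-\alpha)}\int_0^1\big(\sum_{k=1}^n\frac{(-tx)^k}{k}\big)t^{-\alpha-1}(1-t)^{\alpha-1}dt$ quoted from \cite{3}, rewrites the constant term $1$ as $\frac{1}{\alpha\,\Gamma(\alpha)\Gamma(-\alpha)}\int_0^1 tF'(t)\,dt$ via the Beta identity $\int_0^1 t^{-\alpha}(1-t)^{\alpha-1}dt=-\alpha\Gamma(\alpha)\Gamma(-\alpha)$, and then performs a \emph{single} global integration by parts on $\int_0^1 F'(t)\big(-\frac{t}{\alpha}+\sum_{k=1}^n\frac{(-tx)^k}{k}\big)dt$, using the vanishing of the boundary terms at $0$ and $1$. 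You instead prove the representation from scratch, evaluating each moment $\int_0^1 F(t)t^{k-1}dt=\frac{(-\alpha)_k\Gamma(\alpha)\Gamma(-\alpha)}{k!}$ by an integration by parts reducing to $B(k-\alpha,\alpha)$, and then summing; the endpoint asymptotics $F(t)\sim\frac{1}{\alpha}t^{-\alpha}$ you supply are exactly what is needed and are only implicit in the paper (which relegates integrability to a remark). Your version buys self-containedness — it does not presuppose the formula from \cite{3} — at the cost of redoing that derivation coefficient by coefficient; the paper's version is shorter given the citation. The second half (the geometric sum $\sum_{k=1}^{2n-1}(-t)^{k-1}e^{ki\theta}=\frac{e^{i\theta}+t^{2n-1}e^{2ni\theta}}{1+te^{i\theta}}$ and the rationalization producing $B(t,\theta)+iC(t,\theta)$) is carried out explicitly by you but only asserted implicitly in the paper, and your computation is correct, including the use of the parity of $2n-1$ in the sign of $w^{2n-1}$.
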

\begin{proof}
We use two equalities in order to prove the   assertion of the lemma, the  first one is the following:
\begin{eqnarray}\label{1745fd55dw}A_n(\alpha,x)=1+\frac{1}{\Gamma(\alpha)\Gamma(-\alpha)}\int_0^1\Big(\sum_{k=1}^n\frac{(-tx)^k}{k}\Big)t^{-\alpha-1}(1-t)^{\alpha-1}dt,
\end{eqnarray}
which has been deduced in \cite{3},
while the  second    one     is   the  well-known equality  $B(p,q)=\int_0^1t^{p-1}(1-t)^{q-1}dt=\frac{\Gamma(p)\Gamma(q)}{\Gamma(p+q)}.$
Replacing $p=\alpha$      and  $q=1-\alpha,$    we get
$$\int_0^1t^{\alpha-1}(1-t)^{-\alpha}dt=\frac{\Gamma(\alpha)\Gamma(1-\alpha)}{\Gamma(1)}=-\alpha\Gamma(\alpha)\Gamma(-\alpha)=\int_0^1t^{-\alpha}(1-t)^{\alpha-1}dt.$$
This equality and  (\ref{1745fd55dw})  imply  that
\begin{eqnarray}\Gamma(\alpha)\Gamma(-\alpha)A_n(\alpha,x)=\frac{1}{\alpha}\int_0^1tF'(t)dt-\int_0^1\Big(\sum_{k=1}^n\frac{(-tx)^k}{k}\Big){F'(t)}dt,\nonumber
\end{eqnarray}
or equivalently
\begin{eqnarray}\label{1dp3k07}-\Gamma(\alpha)\Gamma(-\alpha)A_n(\alpha,x)=\int_0^1{F'(t)}\Big(\frac{-t}{\alpha}+\sum_{k=1}^n\frac{(-tx)^k}{k}\Big)dt.\nonumber
\end{eqnarray}
Now  integrating by parts and using   that   $\lim_{t\rightarrow0}F(t)\Big(\frac{t}{\alpha}+\sum_{k=1}^n\frac{(-tx)^k}{k}\Big)=0$   and    $\lim_{t\rightarrow1}F(t)\Big(\frac{t}{\alpha}+\sum_{k=1}^n\frac{(-tx)^k}{k}\Big)=0,$   we infer  that
\begin{eqnarray}\label{2ww4dp3k07}-\Gamma(\alpha)\Gamma(-\alpha)A_n(\alpha,x)=\int_0^1{F(t)}\Big(\frac{1}{\alpha}+\sum_{k=1}^n{(-t)^{k-1}}x^k\Big)dt.\nonumber
\end{eqnarray}
  We get the desired equality   replacing    $n$  by $2n-1$.
\end{proof}
\begin{remark}
It is easily seen that the condition  $\alpha\in(0,1)$  implies the existence of the integrals in the previous lemma and its proof.
\end{remark}
We denote
\begin{eqnarray}\label{234n234}
\Phi(\theta)=-\Gamma(\alpha)\Gamma(-\alpha)A_{2n-1}(\alpha,e^{i\theta})=\int_0^1F(t)\Big[\frac{1}{\alpha}\nonumber\\+\frac{\cos\theta+t+t^{2n-1}\cos2n\theta+t^{2n}\cos(2n-1)\theta}{1+t^2+2t\cos\theta}\\+i\frac{\sin\theta+t^{2n-1}\sin2n\theta+t^{2n}\sin(2n-1)\theta}{1+t^2+2t\cos\theta}\Big]dt\nonumber.
\end{eqnarray}
Since   $|\Phi(\theta)|^2\in\mathbb{R}$  it follows that
\begin{eqnarray}\label{a3x234n2x34}
|\Phi(\theta)|^2=\Phi(\theta)\overline{\Phi(\theta)}=\Big[\int_0^1F(t)\Big(\frac{1}{\alpha}\nonumber\\+\frac{\cos\theta+t+t^{2n-1}\cos2n\theta+t^{2n}\cos(2n-1)\theta}{1+t^2+2t\cos\theta}\nonumber\\+i\frac{\sin\theta+t^{2n-1}\sin2n\theta+t^{2n}\sin(2n-1)\theta}{1+t^2+2t\cos\theta}\Big)dt\Big]\nonumber\\\Big[\int_0^1F(v)\Big(\frac{1}{\alpha}+\frac{\cos\theta+v+v^{2n-1}\cos2n\theta+v^{2n}\cos(2n-1)\theta}{1+v^2+2v\cos\theta}\\-i\frac{\sin\theta+v^{2n-1}\sin2n\theta+v^{2n}\sin(2n-1)\theta}{1+v^2+2v\cos\theta}\Big)dv\Big]\nonumber\\=\int_0^1\int_0^1F(t)F(v)\Big[\Big(\frac{1}{\alpha}+\frac{\cos\theta+t+t^{2n-1}\cos2n\theta+t^{2n}\cos(2n-1)\theta}{1+t^2+2t\cos\theta}\Big)\nonumber\\
\Big(\frac{1}{\alpha}+\frac{\cos\theta+v+v^{2n-1}\cos2n\theta+v^{2n}\cos(2n-1)\theta}{1+v^2+2v\cos\theta}\Big)\nonumber\\
+\Big(\frac{\sin\theta+t^{2n-1}\sin2n\theta+t^{2n}\sin(2n-1)\theta}{1+t^2+2t\cos\theta}\Big)\nonumber\\
\Big(\frac{\sin\theta+v^{2n-1}\sin2n\theta+v^{2n}\sin(2n-1)\theta}{1+v^2+2v\cos\theta}\Big)\Big].\nonumber
\end{eqnarray}
\begin{lemma}
(a)  \ Let $f,g:[0,1]\rightarrow\mathbb{R}$   be  two continuous function.  If  there  is a point $t^*\in(0,1)$  such that $f$  is decreasing on $(t^*,1),$      and the equation $g(t)=0$  has a unique root  $t_0\in[t^*, 1), $     such that   $g(t)\leq0, \ t\in[t_0,1],$    \   $g(t)\geq0, \   t\in{[0,t_0]},$    and     $f(v)\geq{f(t_0)}$  for $v\in[0,t^*],$     then we have   $$\int_0^1f(t)g(t)dt\geq{f(t_0)}\int_0^1g(t)dt.$$

(b) \ Let $f,g:[0,1]\rightarrow\mathbb{R}$  two continuous functions.   If   $f$   is a decreasing function,  and if there is a point $t_0\in(0,1)$   such that  $g(t)\geq0, \ t\in(0,t_0)$  and   $g(t)\leq0, \ t\in(t_0,1),$  then
$$\int_0^1f(t)g(t)dt\geq{f(t_0)}\int_0^1g(t)dt.$$
The statement (b)  is a particular case of (a).\\
(c) \ A well-known result is the following statement.   (Chebyshev's inequality)     If   $f$  and   $g$  are monotonic functions with  different  monotony,     then  $$\int_0^1f(t)g(t)dt\leq\int_0^1f(t)dt\int_0^1g(t)dt$$
 and in case of  the   same monotony   we have  $$\int_0^1f(t)g(t)dt\geq\int_0^1f(t)dt\int_0^1g(t)dt.$$
\end{lemma}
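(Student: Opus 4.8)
The plan is to handle part (a) by a direct pointwise sign argument, and then to note that (b) is merely the special case in which $t^*$ may be taken arbitrarily small, while (c) is the classical Chebyshev inequality. For (a), set $h(t):=\bigl(f(t)-f(t_0)\bigr)g(t)$. The inequality to be proved, $\int_0^1 f(t)g(t)\,dt\ge f(t_0)\int_0^1 g(t)\,dt$, is exactly $\int_0^1 h(t)\,dt\ge 0$, so it suffices to show that $h(t)\ge 0$ for every $t\in[0,1]$.

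To see this I would split $[0,1]$ at the two points $t^*\le t_0$ and examine the signs of the factors $f(t)-f(t_0)$ and $g(t)$ on each of the three resulting subintervals. On $[t_0,1]$ the hypothesis gives $g(t)\le 0$; since $t_0\ge t^*$, the function $f$ is decreasing on $(t_0,1)$, so $f(t)\le f(t_0)$ there (and $f(1)\le f(t_0)$ by continuity), whence the product of two nonpositive numbers is $\ge 0$. On $[t^*,t_0]$ we have $g(t)\ge 0$, and again by monotonicity of $f$ on $(t^*,1)$ together with continuity we get $f(t)\ge f(t_0)$, so $h(t)\ge 0$. On $[0,t^*]$ we have $g(t)\ge 0$ because this subinterval lies inside $[0,t_0]$, and the extra hypothesis $f(v)\ge f(t_0)$ for $v\in[0,t^*]$ gives $f(t)-f(t_0)\ge 0$; hence $h(t)\ge 0$ here as well. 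Joining the three pieces yields $h\ge 0$ on $[0,1]$, and integrating finishes (a).

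Part (b) follows by choosing any $t^*\le t_0$ (for instance $t^*=t_0$): if $f$ is decreasing on all of $[0,1]$ then it is decreasing on $(t^*,1)$ and satisfies $f(v)\ge f(t^*)\ge f(t_0)$ on $[0,t^*]$, so the hypotheses of (a) hold. For (c) I would recall the one-line proof: the integrand of $\int_0^1\int_0^1\bigl(f(t)-f(s)\bigr)\bigl(g(t)-g(s)\bigr)\,dt\,ds$ has constant sign — nonnegative when $f$ and $g$ have the same monotonicity, nonpositive otherwise — and the double integral equals $2\int_0^1 fg-2\int_0^1 f\int_0^1 g$, which gives both inequalities at once. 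I do not anticipate a genuine obstacle here: the only care needed is the endpoint bookkeeping (the middle interval $[t^*,t_0]$ may degenerate to a point when $t_0=t^*$, which is harmless, and continuity of $f$ is used only to transfer sign information to the closed endpoints), together with the observation that the three hypotheses on $f$ and the sign pattern of $g$ are arranged precisely so that $f(t)-f(t_0)$ and $g(t)$ never have opposite signs.
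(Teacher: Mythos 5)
Your argument is correct and is essentially the paper's own proof: the paper likewise splits the integral at $t_0$ and uses the pointwise comparison $f(t)g(t)\ge f(t_0)g(t)$ on each piece (which is exactly your observation that $\bigl(f(t)-f(t_0)\bigr)g(t)\ge 0$), your only additions being the explicit treatment of the subinterval $[0,t^*]$ and the standard double-integral proof of (c), which the paper omits as well known.
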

\begin{proof}
We have
\begin{eqnarray}\int_0^1f(t)g(t)dt=\int_0^{t_0}f(t)g(t)dt+\int_{t_0}^1f(t)g(t)dt\geq\int_0^{t_0}f(t)g(t_0)dt\nonumber\\+\int_{t_0}^1f(t)g(t_0)dt=\int_{0}^1f(t)g(t_0)dt.\nonumber\end{eqnarray}
\end{proof}
\begin{lemma}
If $\theta  \in[0,\frac{\pi}{2}],$    and     $n\geq27,$  then
\begin{eqnarray}\label{1q2} \ \ \ \ \        (a)    \ \ \ \ \ \ \  \int_0^1F(t)\Big(B(t,0)-B(t,\theta)\Big)dt \ \ \ \ \ \nonumber\\\geq\int_0^1F(t)\frac{\frac{1}{2}(1-\cos{\theta})+t^{2n-1}(1-\cos2n\theta)+t^{2n}(1-\cos(2n-1)\theta)}{(1+t)(1+t^2+2t\cos\theta)}dt,\end{eqnarray}
\begin{eqnarray}\label{z3x61q2} \ \ \ \ \ \     (b)   \ \ \ \ \ \ \ \  \int_0^1F(t)\Big(1+B(t,\theta)\Big)dt \ \ \ \ \ \nonumber\\\geq\int_0^1F(t)\frac{(1+t)(1+\cos{\theta})+t^{2n-1}(1+\cos2n\theta)+t^{2n}(1+\cos(2n-1)\theta)}{1+t^2+2t\cos\theta}dt.\end{eqnarray}
\end{lemma}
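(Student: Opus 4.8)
Both inequalities have the form $\int_0^1F(t)U(t)\,dt\ge\int_0^1F(t)L(t)\,dt$, and I would treat each the same way: subtract, simplify $U-L$ to a single quotient, recognise the integrand as $F(t)$ (or a nonnegative decreasing multiple of it) times a function that changes sign exactly once, and then combine Lemma~2(b) with a crude estimate of a one–variable integral. The hypothesis $n\ge27$ should enter only at the very end, in a comparison of explicit constants.

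\emph{Part (b).} Bringing $1+B(t,\theta)$ over the denominator $1+t^2+2t\cos\theta$ and subtracting, a direct computation gives
\[
\bigl(1+B(t,\theta)\bigr)-\frac{(1+t)(1+\cos\theta)+t^{2n-1}(1+\cos2n\theta)+t^{2n}(1+\cos(2n-1)\theta)}{1+t^2+2t\cos\theta}=\frac{t\bigl(t+\cos\theta-t^{2n-2}(1+t)\bigr)}{1+t^2+2t\cos\theta}=:g(t),
\]
so (b) amounts to $\int_0^1F(t)g(t)\,dt\ge0$. The function $\varphi(t):=t+\cos\theta-t^{2n-2}-t^{2n-1}$ has $\varphi''<0$ on $(0,1]$, hence is unimodal; since $\varphi(0)=\cos\theta\ge0$ and $\varphi(1)=\cos\theta-1\le0$, there is a unique $t_0\in(0,1)$ with $\varphi\ge0$ on $[0,t_0]$ and $\varphi\le0$ on $[t_0,1]$ (for $\theta=0$, $\varphi=(1+t)(1-t^{2n-2})\ge0$ and (b) is trivial). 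As $g=\frac{t}{1+t^2+2t\cos\theta}\,\varphi$ with a nonnegative first factor, $g$ has the same sign pattern, and Lemma~2(b) with $f=F$ gives $\int_0^1Fg\ge F(t_0)\int_0^1g$ with $F(t_0)>0$. It then remains to note
\[
\int_0^1g(t)\,dt=\int_0^1\frac{t^2+t\cos\theta}{1+t^2+2t\cos\theta}\,dt-\int_0^1\frac{t^{2n-1}+t^{2n}}{1+t^2+2t\cos\theta}\,dt\ge\Bigl(1-\frac{\pi}{4}\Bigr)-\frac1n\ge0,
\]
because $\frac{t^2+t\cos\theta}{1+t^2+2t\cos\theta}$ is nondecreasing in $\cos\theta$ (so the first integral is $\ge\int_0^1\frac{t^2}{1+t^2}\,dt=1-\frac\pi4$), while $1+t^2+2t\cos\theta\ge1$ makes the second at most $\int_0^1(t^{2n-1}+t^{2n})\,dt<\frac1n$; $n\ge27$ is far more than enough.

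\emph{Part (a).} Here $B(t,0)=\dfrac{1+t+t^{2n-1}+t^{2n}}{(1+t)^2}=\dfrac{1+t^{2n-1}}{1+t}$. Putting $B(t,0)-B(t,\theta)$ and the right–hand integrand over $(1+t)(1+t^2+2t\cos\theta)$ and using $1-\cos2n\theta=2\sin^2n\theta$, $1-\cos(2n-1)\theta=2\sin^2\tfrac{(2n-1)\theta}{2}$, one obtains
\[
B(t,0)-B(t,\theta)-\frac{M(t)}{(1+t)(1+t^2+2t\cos\theta)}=\frac{(1-\cos\theta)\bigl(\tfrac12-t-2t^{2n}\bigr)+2\sin^2n\theta\;t^{2n}+2\sin^2\tfrac{(2n-1)\theta}{2}\;t^{2n+1}}{(1+t)(1+t^2+2t\cos\theta)},
\]
where $M(t)=\tfrac12(1-\cos\theta)+t^{2n-1}(1-\cos2n\theta)+t^{2n}(1-\cos(2n-1)\theta)$ is the numerator of the right–hand integrand. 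With $h(t):=\dfrac{F(t)}{(1+t)(1+t^2+2t\cos\theta)}$, which is nonnegative and decreasing on $[0,1]$ for $\theta\in[0,\tfrac\pi2]$ (as $F\ge0$ is decreasing, $F'<0$, $F(1)=0$, and $(1+t)(1+t^2+2t\cos\theta)$ is positive and increasing there), multiplying the displayed identity by $F(t)$, integrating, and discarding the two nonnegative terms $2\sin^2n\theta\,t^{2n}$, $2\sin^2\tfrac{(2n-1)\theta}{2}\,t^{2n+1}$ reduces (a) to
\[
\int_0^1h(t)\Bigl(\tfrac12-t-2t^{2n}\Bigr)\,dt\ \ge\ 0 .
\]
Writing $F(t)=\int_t^1s^{-1-\alpha}(1-s)^{\alpha-1}\,ds$ and interchanging integrations turns this into $\int_0^1s^{-1-\alpha}(1-s)^{\alpha-1}H(s)\,ds$ with $H(s)=\int_0^s\frac{\tfrac12-t-2t^{2n}}{(1+t)(1+t^2+2t\cos\theta)}\,dt$. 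Since $\tfrac12-t-2t^{2n}$ is positive on $[0,t_1)$ and negative on $(t_1,1]$ for some $t_1<\tfrac12$, $H$ increases then decreases from $H(0)=0$, so $H\ge\min\{0,H(1)\}$ and it is enough to check $H(1)\ge0$, i.e.
\[
\int_0^1\frac{\tfrac12-t}{(1+t)(1+t^2+2t\cos\theta)}\,dt\ \ge\ 2\int_0^1\frac{t^{2n}}{(1+t)(1+t^2+2t\cos\theta)}\,dt .
\]
The right side is $\le2\int_0^1t^{2n}\,dt=\tfrac{2}{2n+1}$; the left side, split at $t=\tfrac12$ and estimated with $1+t^2+2t\cos\theta\le(1+t)^2$ on $[0,\tfrac12]$ and $\ge1+t^2$ on $[\tfrac12,1]$, is at least $\int_0^{1/2}\frac{\tfrac12-t}{(1+t)^3}\,dt-\int_{1/2}^1\frac{t-\tfrac12}{(1+t)(1+t^2)}\,dt=\frac1{12}-\int_{1/2}^1\frac{t-\tfrac12}{(1+t)(1+t^2)}\,dt$, a $\theta$–independent positive constant which exceeds $\tfrac{2}{2n+1}$ for $n\ge27$.

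The algebraic reductions and the explicit one–variable integrals are routine; the one genuinely delicate step is the last inequality of (a), namely that $\int_0^1\frac{\tfrac12-t}{(1+t)(1+t^2+2t\cos\theta)}\,dt$ stays bounded away from $0$ uniformly for $\theta\in[0,\tfrac\pi2]$, since it is precisely here that the bound $n\ge27$ is needed. Once it is established, the decreasing–rearrangement argument (Lemma~2(b)) together with the interchange of the order of integration closes both parts.
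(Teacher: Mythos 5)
Your proof is correct and starts from the same algebraic skeleton as the paper: your identity for $B(t,0)-B(t,\theta)-\frac{M(t)}{(1+t)(1+t^2+2t\cos\theta)}$ is exactly the paper's decomposition (its equality preceding the proof of (a)), and your single-quotient subtraction in (b) merely merges the paper's five-term splitting of $1+B(t,\theta)$ into the combined leftover $t^2+t\cos\theta-t^{2n-1}-t^{2n}$. Where you genuinely diverge is in how the residual integrals are shown to be nonnegative. For (a), the paper keeps the factorization $g(t)=\frac{\frac12-t-2t^{2n}}{1+t}$, applies Lemma~2(b) with $f(t)=\frac{F(t)}{1+t^2+2t\cos\theta}$, and gets $\int_0^1g\,dt\geq\frac32\ln2-1-\frac{2\ln2}{2n+1}>0$ via Chebyshev's inequality (Lemma~2(c)); you instead unfold $F(t)=\int_t^1s^{-1-\alpha}(1-s)^{\alpha-1}ds$, interchange the order of integration (legitimate, since $F$ is integrable on $(0,1)$ for $\alpha\in(0,1)$ and the other factor is bounded), and reduce everything to $H(1)\geq0$ for the primitive $H$. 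Both work; the paper's route needs only $\frac32\ln2-1\approx0.0397$ versus $\frac{2\ln2}{2n+1}$, yours needs a $\theta$-uniform lower bound on $\int_0^1\frac{\frac12-t}{(1+t)(1+t^2+2t\cos\theta)}dt$ versus $\frac{2}{2n+1}$. For (b), the paper discards the two nonnegative leftovers $\int_0^1F(t)\frac{t\cos\theta}{1+t^2+2t\cos\theta}dt$ and $\int_0^1F(t)\frac{t^2-t^{2n}-t^{2n-1}}{1+t^2+2t\cos\theta}dt$ separately (the latter again by Lemma~2(b), using $\frac13-\frac1{2n+1}-\frac1{2n}>0$), whereas you handle the combined leftover in one application of Lemma~2(b) with $f=F$ and the concavity of $\varphi$; your version is tidier and works for much smaller $n$.

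The one loose end is the final numerical comparison in (a), which you flag but do not carry out: one finds $\int_{1/2}^1\frac{t-\frac12}{(1+t)(1+t^2)}dt=\frac14-\frac34\ln\frac43+\frac38\ln\frac85+\frac14\big(\frac\pi4-\arctan\frac12\big)\approx0.041$, so your lower bound is about $\frac1{12}-0.041\approx0.042$, which indeed exceeds $\frac{2}{2n+1}\leq\frac{2}{55}\approx0.036$ for $n\geq27$. The margin is thin, so this evaluation is not optional; with it, the proof is complete.
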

\begin{proof} According to  Lemma 2   (c),    we have
\begin{equation}\label{1234}
\int_0^1\frac{t^{2n}}{1+t}dt\leq\int_0^1{t^{2n}}dt\int_0^1\frac{1}{1+t}dt=\frac{\ln2}{2n+1}.
\end{equation}
We use  assertion  (b)   of     Lemma 2    putting $f(t)=\frac{F(t)}{1+t^2+2t\cos\theta}$   and    $g(t)=\frac{\frac{1}{2}-t-2t^{2n}}{1+t}.$    If   $\theta\in\big[0,\frac{\pi}{2}\big],$  then   the mapping $f:[0,1]\rightarrow[0,\infty)$  is strictly decreasing  and we get

\begin{eqnarray}\label{1tds0nm}\int_0^1F(t)\frac{\frac{1}{2}-t-2t^{2n}}{(1+t)(1+t^2+2t\cos\theta)}dt\geq\frac{F(t_n)}{1+t_n^2+2t_n\cos\theta}\int_0^1\Big(\frac{\frac{1}{2}-t}{1+t}\nonumber\\-2\frac{t^{2n}}{1+t}\Big)dt\geq\frac{F(t_n)}{1+t_n^2+2t_n\cos\theta}\Big(\frac{3}{2}\ln2-1-\frac{2\ln2}{2n+1}\Big)>0,
\end{eqnarray}
where $t_n$  denotes the unique root of the equation $\frac{1}{2}-t-2t^{2n}=0,$       in the interval   $ \ t\in(0,1).$
The following equality holds:
\begin{eqnarray}\label{x7v1q2}\int_0^1F(t)\Big(B(t,0)-B(t,\theta)\Big)dt \ \ \ \ =\int_0^1F(t)\frac{\frac{1}{2}(1-\cos{\theta})}{(1+t)(1+t^2+2t\cos\theta)}dt\nonumber\\+\int_0^1F(t)\frac{(t^{2n}+t^{2n-1})(1-\cos2n\theta)}{(1+t)(1+t^2+2t\cos\theta)}dt+\int_0^1F(t)\frac{(t^{2n}+t^{2n+1})(1-\cos(2n-1)\theta)}{(1+t)(1+t^2+2t\cos\theta)}dt\nonumber\\
+\int_0^1F(t)\frac{(\frac{1}{2}-t-2t^{2n})(1-\cos\theta)}{(1+t)(1+t^2+2t\cos\theta)}dt.  \ \ \ \ \end{eqnarray}
The equality   (\ref{x7v1q2})  and the inequality (\ref{1tds0nm})   imply  that
\begin{eqnarray}\label{x1mn7v1q2}\int_0^1F(t)\Big(B(t,0)-B(t,\theta)\Big)dt \ \ \ \ \geq\int_0^1F(t)\frac{\frac{1}{2}(1-\cos{\theta})}{(1+t)(1+t^2+2t\cos\theta)}dt\nonumber\\+\int_0^1F(t)\frac{t^{2n-1}(1-\cos2n\theta)}{(1+t)(1+t^2+2t\cos\theta)}dt+\int_0^1F(t)\frac{t^{2n}(1-\cos(2n-1)\theta)}{(1+t)(1+t^2+2t\cos\theta)}dt.  \ \
\end{eqnarray}
We use  assertion  (b)   of     Lemma 2    putting $f(t)=\frac{F(t)}{1+t^2+2t\cos\theta}$   and    $g(t)={t^2-2t^{2n}-t^{2n-1}}.$   If   $\theta\in[0,\frac{\pi}{2}],$   then   $f$  is strictly decreasing and we  get
\begin{eqnarray}\int_0^1F(t)\frac{t^{2}-t^{2n}-t^{2n-1}}{1+t^2+2t\cos\theta}dt\geq\frac{{F}(t_n)}{1+t_n^2+2t_n\cos\theta}\int_0^1({t^{2}-t^{2n}-t^{2n-1}})dt\nonumber\\=\frac{{F}(t_n)}{1+t_n^2+2t_n\cos\theta}\Big(\frac{1}{3}-\frac{1}{2n+1}-\frac{1}{2n}\Big)>0.\end{eqnarray}
In order to  finish the proof of  the second inequality,  we take notice of the fact that in case  $\theta\in[0,\frac{\pi}{2}]$  each member of the following sum is positive:
\begin{eqnarray}
\int_0^1F(t)\Big(1+B(t,\theta)\Big)dt= \int_0^1F(t)\frac{(1+t)(1+\cos{\theta})}{1+t^2+2t\cos\theta}dt\nonumber\\+\int_0^1F(t)\frac{t^{2n-1}(1+\cos2n\theta)}{(1+t)(1+t^2+2t\cos\theta)}dt+\int_0^1F(t)\frac{t^{2n}(1+\cos(2n-1)\theta)}{1+t^2+2t\cos\theta}dt\nonumber\\+\int_0^1F(t)\frac{t^{2}-t^{2n}-t^{2n-1}}{1+t^2+2t\cos\theta}dt+ \int_0^1F(t)\frac{t\cos\theta}{1+t^2+2t\cos\theta}dt.
\nonumber\end{eqnarray}
Thus we get
\begin{eqnarray}
\int_0^1F(t)\Big(1+B(t,\theta)\Big)dt\geq \int_0^1F(t)\frac{(1+t)(1+\cos{\theta})}{(1+t)(1+t^2+2t\cos\theta)}dt\nonumber\\+\int_0^1F(t)\frac{t^{2n-1}(1+\cos2n\theta)}{(1+t)(1+t^2+2t\cos\theta)}dt+\int_0^1F(t)\frac{t^{2n}(1+\cos(2n-1)\theta)}{(1+t)(1+t^2+2t\cos\theta)}dt,
\nonumber\end{eqnarray}
and the proof is done.
\end{proof}

\begin{lemma}
If $\theta\in\Big[\frac{\pi}{2},\frac{2\pi}{3}\Big],$      then
\begin{eqnarray}\label{jm5nnz4x1q2}\frac{5}{2}+\frac{t+\cos\theta}{1+t^2+2t\cos\theta}\geq\frac{50}{23}\frac{(1+t)(1+\cos\theta)}{1+t^2+2t\cos\theta}, \ \     (\forall)   \  t\in[0,1].\end{eqnarray}
\end{lemma}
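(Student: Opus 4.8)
The plan is to clear the (strictly positive) denominator and reduce the claim to an elementary polynomial inequality in two real variables, and then to exploit affinity in one of them. Writing $c=\cos\theta$, the hypothesis $\theta\in\big[\frac{\pi}{2},\frac{2\pi}{3}\big]$ is exactly $c\in[-\tfrac12,0]$. First I would note that the common denominator $D(t)=1+t^2+2tc$ is strictly positive on $[0,1]$: since $c\ge-\tfrac12$ we get $D(t)\ge 1+t^2-t=\big(t-\tfrac12\big)^2+\tfrac34>0$. Hence, after multiplying (\ref{jm5nnz4x1q2}) through by $D(t)$, the claim becomes
\[
P(t,c):=\tfrac52\bigl(1+t^2+2tc\bigr)+t+c-\tfrac{50}{23}(1+t)(1+c)\ \ge\ 0,\qquad t\in[0,1],\ c\in[-\tfrac12,0].
\]

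Next I would expand $P$ and collect terms, obtaining
\[
P(t,c)=\tfrac52\,t^2+\tfrac{65}{23}\,tc-\tfrac{27}{23}\,t-\tfrac{27}{23}\,c+\tfrac{15}{46}.
\]
The key remark is that $c\mapsto P(t,c)$ is affine; therefore, for each fixed $t$, the minimum of $P(t,c)$ over $c\in[-\tfrac12,0]$ is attained at an endpoint, so it suffices to prove $P(t,0)\ge0$ and $P\big(t,-\tfrac12\big)\ge0$ for every $t\in[0,1]$ (in fact for every real $t$).

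Finally I would check these two one-variable inequalities via their discriminants. One computes $46\,P(t,0)=115t^2-54t+15$, with discriminant $54^2-4\cdot115\cdot15=2916-6900<0$, and $46\,P\big(t,-\tfrac12\big)=115t^2-119t+42$, with discriminant $119^2-4\cdot115\cdot42=14161-19320<0$. Since both quadratics have positive leading coefficient and negative discriminant, they are strictly positive on $\mathbb{R}$, hence $P(t,c)\ge0$ on the whole parameter box, which is (\ref{jm5nnz4x1q2}). There is no genuine obstacle here; the only points needing a little care are the positivity of $D(t)$ on the relevant range of $\theta$ and the bookkeeping in expanding $P$, after which the affine-in-$c$ reduction replaces a two-dimensional estimate by two routine discriminant checks.
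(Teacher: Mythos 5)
Your proposal is correct. You and the paper start identically: after observing that the denominator $1+t^2+2t\cos\theta\ge 1+t^2-t>0$, both reduce the claim to the nonnegativity on $[0,1]$ of the same quadratic $P(t,c)=\tfrac52 t^2+\big(\tfrac{65}{23}c-\tfrac{27}{23}\big)t+\tfrac{15}{46}-\tfrac{27}{23}c$ with $c=\cos\theta\in[-\tfrac12,0]$, which is exactly the paper's inequality (\ref{gjm5s4nnz4x1q2}). The finishing arguments genuinely differ. The paper minimizes in $t$ first: it locates the vertex (its printed formula $t^*=\tfrac{65}{115}\cos\theta-\tfrac{27}{115}$ has the sign reversed --- the vertex is $\tfrac{27-65\cos\theta}{115}\in(0,1)$), evaluates $f(t^*)$, and then bounds the resulting function of $\cos\theta$ from below through a change of variable and a second quadratic minimization; that portion of the paper contains further bookkeeping slips even though the conclusion stands. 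You instead minimize in $c$ first, using that $P(t,\cdot)$ is affine so the minimum over $c\in[-\tfrac12,0]$ sits at an endpoint, and you are left with the two discriminant checks $54^2-4\cdot115\cdot15<0$ and $119^2-4\cdot115\cdot42<0$, both of which I verified. Your route is shorter, bypasses the vertex and substitution computations entirely, and actually yields the stronger conclusion that $P(t,c)>0$ for all real $t$, not just $t\in[0,1]$; the paper's order of optimization would only be advantageous if the inequality were tight strictly inside the $t$-interval, which it is not here.
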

\begin{proof}
The inequality (\ref{jm5nnz4x1q2})  is equivalent to
\begin{eqnarray}\label{gjm5s4nnz4x1q2}f(t)=\frac{5}{2}t^2+t\Big(\frac{65}{23}\cos\theta-\frac{27}{23}\Big)+\frac{15}{46}-\frac{27}{23}\cos\theta\geq0, \ \     (\forall)   \  t\in[0,1].\end{eqnarray}
The function   $f$   has a minimum point at  $t^*=\frac{65}{115}\cos\theta-\frac{27}{115}\in(0,1),$    for every     $\theta\in\Big[\frac{\pi}{2},\frac{2\pi}{3}\Big].$
Thus we get
$$f(t)\geq{f(t^*)}=\frac{15}{46}-\frac{27}{23}\cos\theta-\frac{1}{10}\Big(\frac{27-65\cos\theta}{23}\Big)^2,  \   (\forall)   \  t\in[0,1]  \  \textrm{and}    \  \theta\in\Big[\frac{\pi}{2},\frac{2\pi}{3}\Big].$$
Consequently in order to prove (\ref{jm5nnz4x1q2})  we have to show that
$$\frac{15}{46}-\frac{27}{23}\cos\theta-\frac{1}{10}\Big(\frac{27-65\cos\theta}{23}\Big)^2\geq0,  \   (\forall)   \     \  \theta\in\Big[\frac{\pi}{2},\frac{2\pi}{3}\Big].$$
Using the notation  $x=\frac{13-35\cos\theta}{35},$  we get
$$\frac{5}{12}-\frac{13}{12}\cos\theta-\frac{5}{2}\Big(\frac{13-35\cos\theta}{60}\Big)^2=-\frac{483}{2990}+\frac{27}{65}x-\frac{1}{10}x^2.$$
$$ \theta\in\Big[\frac{\pi}{2},\frac{2\pi}{3}\Big]\Rightarrow \  x\in\Big[\frac{37}{23},\frac{119}{46}\Big].$$
The  equality implies
\begin{eqnarray}\frac{5}{12}-\frac{13}{12}\cos\theta-\frac{5}{2}\Big(\frac{13-35\cos\theta}{60}\Big)^2\geq\min_{ x\in[\frac{37}{23},\frac{119}{46}\Big]}\Big\{-\frac{483}{2990}+\frac{27}{65}x-\frac{1}{10}x^2\Big\}\nonumber\\
\geq\min_{ x\in\Big[\frac{3}{2},\frac{13}{5}\Big]}\Big\{-\frac{1}{5}+\frac{5}{13}x-\frac{1}{10}x^2\Big\}=\min\Big\{g\Big(\frac{3}{2}\Big),g\Big(\frac{13}{5}\Big)\Big\}\nonumber\\=g\Big(\frac{13}{5}\Big)=\frac{31}{250}, \ \  \theta\in\Big[\frac{\pi}{2},\frac{2\pi}{3}\Big],\nonumber
\end{eqnarray}
where $g(x)=-\frac{1}{5}+\frac{5}{13}x-\frac{1}{10}x^2.$
and consequently    (\ref{jm5nnz4x1q2})  holds.
\end{proof}

\begin{lemma}
If $\theta\in\Big[\frac{\pi}{2},\frac{2\pi}{3}\Big],$    and  $n\geq27,$   then
\begin{eqnarray}\label{z4x1q2}\int_0^1F(t)\Big(B(t,0)-B(t,\theta)\Big)dt \ \ \ \ \ \nonumber\\\geq\int_0^1F(t)\frac{\frac{27}{50}(1-\cos{\theta})+t^{2n-1}(1-\cos2n\theta)+t^{2n}(1-\cos(2n-1)\theta)}{(1+t)(1+t^2+2t\cos\theta)}dt \ \  \ \ \end{eqnarray}
\begin{eqnarray}\label{q3sz3x61q2}\int_0^1F(t)\Big(2+B(t,0)+B(t,\theta)\Big)dt\geq\int_0^1F(t) \ \ \ \ \ \nonumber\\\frac{\frac{50}{27}(1+t)(1+\cos{\theta})+2t^{2n-1}(1+\cos2n\theta)+2t^{2n}(1+\cos(2n-1)\theta)}{1+t^2+2t\cos\theta}dt \ \  \end{eqnarray}
\end{lemma}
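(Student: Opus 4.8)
The plan is to follow the scheme of the proof of Lemma 3, but now on the smaller interval $[\pi/2,2\pi/3]$, where Lemma 4 becomes available. Two preliminary facts drive the argument. First, on $[\pi/2,2\pi/3]$ the polynomial $(1+t)(1+t^{2}+2t\cos\theta)$ is nondecreasing in $t$: its derivative $3t^{2}+(4\cos\theta+2)t+(1+2\cos\theta)$ has discriminant $8(2\cos\theta+1)(\cos\theta-1)\le 0$ there, hence is $\ge 0$; consequently $f(t)=F(t)/[(1+t)(1+t^{2}+2t\cos\theta)]$ is decreasing, and Lemma 2 applies to it just as to its analogue in Lemma 3. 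Second, interchanging the order of integration in $F(t)=\int_{t}^{1}s^{-1-\alpha}(1-s)^{\alpha-1}\,ds$ gives the moment identities $\int_{0}^{1}F(t)\,dt=\Gamma(\alpha)\Gamma(1-\alpha)$ and $\int_{0}^{1}F(t)t^{m}\,dt=\frac{\Gamma(\alpha)\Gamma(m+1-\alpha)}{(m+1)\Gamma(m+1)}\le\frac{\Gamma(\alpha)}{(m+1)m^{\alpha}}$ (the last by log-convexity of $\Gamma$), which serve to bound the $t^{2n}$-order terms. I will also use $D:=1+t^{2}+2t\cos\theta=(t+\cos\theta)^{2}+\sin^{2}\theta\ge\sin^{2}\theta\ge\tfrac34$, the inequality $\frac{(1+t)(1+\cos\theta)}{D}\ge\tfrac12$ (equivalent to $2(1+\cos\theta)\ge(1-t)^{2}$, valid since $1+\cos\theta\ge\tfrac12$), and $\Gamma(1-\alpha)\ge\Gamma(1)=1$.

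For (\ref{q3sz3x61q2}): since $B(t,0)=\frac{1+t^{2n-1}}{1+t}\ge\tfrac12$ and $B(t,\theta)=\frac{t+\cos\theta}{D}+\frac{t^{2n-1}\cos2n\theta+t^{2n}\cos(2n-1)\theta}{D}$, one has pointwise $2+B(t,0)+B(t,\theta)\ge\tfrac52+B(t,\theta)$, and Lemma 4 applied to $\tfrac52+\frac{t+\cos\theta}{D}$ gives $2+B(t,0)+B(t,\theta)\ge\frac{50}{23}\frac{(1+t)(1+\cos\theta)}{D}+\frac{t^{2n-1}\cos2n\theta+t^{2n}\cos(2n-1)\theta}{D}$ for all $t\in[0,1]$. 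Integrating against $F$ and subtracting the right-hand side of (\ref{q3sz3x61q2}), the difference is at least $\left(\frac{50}{23}-\frac{50}{27}\right)\int_{0}^{1}F(t)\frac{(1+t)(1+\cos\theta)}{D}\,dt-\int_{0}^{1}F(t)\frac{t^{2n-1}(2+\cos2n\theta)+t^{2n}(2+\cos(2n-1)\theta)}{D}\,dt$. The first integral is $\ge\tfrac12\left(\frac{50}{23}-\frac{50}{27}\right)\Gamma(\alpha)\Gamma(1-\alpha)$; the second, using $2+\cos(\cdot)\le3$, $D\ge\tfrac34$, $t^{2n}\le t^{2n-1}$ and the moment bound, is $\le 8\int_{0}^{1}F(t)t^{2n-1}\,dt\le\frac{8\Gamma(\alpha)}{2n\,(2n-1)^{\alpha}}$. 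As $\Gamma(1-\alpha)\ge1$ and $(2n-1)^{\alpha}\ge1$, for $n\ge27$ the difference is at least $\Gamma(\alpha)\left(\frac{100}{621}-\frac{8}{54}\right)>0$; this is exactly where $n\ge27$ is used.

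For (\ref{z4x1q2}): exactly as in (\ref{x7v1q2}) one has the partial-fraction identity $\int_{0}^{1}F(t)\big(B(t,0)-B(t,\theta)\big)\,dt=\int_{0}^{1}F(t)\,\frac{(1-\cos\theta)(1-t)+t^{2n-1}(1-\cos2n\theta)+t^{2n}(2\cos\theta-\cos2n\theta-\cos(2n-1)\theta)+t^{2n+1}(1-\cos(2n-1)\theta)}{(1+t)(1+t^{2}+2t\cos\theta)}\,dt$. Rewriting $2\cos\theta-\cos2n\theta-\cos(2n-1)\theta=(1-\cos2n\theta)+(1-\cos(2n-1)\theta)-2(1-\cos\theta)$ so that the target terms $t^{2n-1}(1-\cos2n\theta)$ and $t^{2n}(1-\cos(2n-1)\theta)$ appear, the claim reduces to $\int_{0}^{1}F(t)\,\frac{(1-\cos\theta)\left(\frac{23}{50}-t-2t^{2n}\right)+t^{2n}(1-\cos2n\theta)+t^{2n+1}(1-\cos(2n-1)\theta)}{(1+t)(1+t^{2}+2t\cos\theta)}\,dt\ge0$. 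Here I would apply Lemma 2 to $f$ and $g(t)=\frac{23}{50}-t-2t^{2n}$ (positive on an initial subinterval, then negative): this turns the $(1-\cos\theta)$-part into a statement about the $f$-weighted centroid of $t$, which the moment identities keep strictly below $\frac{23}{50}$ for all $\theta\in[\pi/2,2\pi/3]$ and $\alpha\in(0,1)$; the remaining two terms are $\ge0$ and need only absorb an $O(1/n)$ residue, which $n\ge27$ permits.

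I expect this last step to be the principal obstacle. The constants in (\ref{z4x1q2}) are sharp enough that the terms $t^{2n}(1-\cos2n\theta)$ and $t^{2n+1}(1-\cos(2n-1)\theta)$ cannot be dropped: in the extremal configuration $\theta=2\pi/3$ with $\cos2n\theta=1$ (which forces $\cos(2n-1)\theta=\cos\theta$, so that the first of these terms vanishes and $(1+t)(1+t^{2}+2t\cos\theta)=1+t^{3}$), the part $(1-\cos\theta)\left(\frac{23}{50}-t-2t^{2n}\right)$ alone integrates against $F(t)/(1+t^{3})$ to a small negative number, and it is precisely the retained $t^{2n+1}(1-\cos(2n-1)\theta)$ term that restores the sign. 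The genuine difficulty is thus to make the accounting of these competing $O(1/n)$ contributions uniform in $\alpha$ and $\theta$.
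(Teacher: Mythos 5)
Your treatment of the second inequality (\ref{q3sz3x61q2}) is correct and takes a genuinely different route from the paper's. The paper isolates a term $\frac{2t^{2n-1}}{1+t}$ plus a residual $\frac{1-t}{2(1+t)}-\frac{t^{2n}(2+\cos(2n-1)\theta)+t^{2n-1}(2+\cos 2n\theta)}{1+t^{2}+2t\cos\theta}$ and disposes of the latter by Lemma 2(b) with $f(t)=F(t)/(2(1+t^{3}))$, ending with the bound $\frac{5}{12}-\frac{12}{n}>0$ (which, as written, is not positive for $n=27,28$). You instead exploit the slack between the constant $\frac{50}{23}$ of Lemma 4 and the constant $\frac{50}{27}$ in the statement, and pay for the $t^{2n-1}$, $t^{2n}$ terms with the explicit moments $\int_{0}^{1}F=\Gamma(\alpha)\Gamma(1-\alpha)$ and $\int_{0}^{1}Ft^{m}\,dt\le\Gamma(\alpha)/(m+1)$; the resulting margin $\Gamma(\alpha)\big(\frac{100}{621}-\frac{4}{27}\big)>0$ is clean and uniform in $\alpha$. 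This part of the proposal I would accept as it stands, and it is arguably more careful than the paper's version.

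The first inequality (\ref{z4x1q2}) is where the proposal has a genuine gap, and you have correctly located it. Your partial-fraction identity and the reduction to $\int_{0}^{1}f(t)\big[(1-\cos\theta)(\tfrac{23}{50}-t-2t^{2n})+t^{2n}(1-\cos 2n\theta)+t^{2n+1}(1-\cos(2n-1)\theta)\big]\,dt\ge0$ with $f(t)=F(t)/[(1+t)(1+t^{2}+2t\cos\theta)]$ are right, but the tool you propose, Lemma 2(b) applied to $g(t)=\frac{23}{50}-t-2t^{2n}$, cannot close the argument: $\int_{0}^{1}g(t)\,dt=-\frac{1}{25}-\frac{2}{2n+1}<0$, so Lemma 2(b) returns a negative lower bound, and the asserted estimate that the $f$-weighted mean of $t+2t^{2n}$ stays below $\frac{23}{50}$ does not follow from your moment identities (the $F$-weighted mean of $t$ alone is $\Gamma(2-\alpha)/(2\Gamma(1-\alpha))=\frac{1-\alpha}{2}$, which tends to $\frac12>\frac{23}{50}$ as $\alpha\to0$, so everything hinges on the additional decreasing weight, and at $\theta=\frac{2\pi}{3}$, $n=27$ the margin is of the same order as the $t^{2n}$ corrections you must absorb). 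You flag this yourself and leave it unresolved, so the proof of (\ref{z4x1q2}) is incomplete. For comparison, the paper avoids this estimate entirely by splitting $1-t-2t^{2n}=\frac{23}{50}+\big(\frac{27}{50}-t-2t^{2n}\big)$ and discarding the second piece, whose integral against $f$ is nonnegative by Lemma 2(b) because $\frac{27}{50}-\frac12-\frac{2}{2n+1}>0$ for $n\ge25$; but that argument establishes the inequality with $\frac{23}{50}$ in place of $\frac{27}{50}$, so the stated constant is not what the paper's own proof yields either (the pair $\frac{23}{50}$, $\frac{50}{23}$ is what Lemma 4 and the subsequent Cauchy--Schwarz step naturally support). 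In short, the obstacle you ran into is real, but the intended proof sidesteps it by settling for the weaker constant rather than proving the centroid bound you got stuck on.
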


\begin{proof}
We use  assertion  (b)   of     Lemma 2,    putting $f(t)=\frac{F(t)}{(1+t)(1+t^2+2t\cos\theta)}$   and    $g(t)={\frac{27}{50}-t-2t^{2n}}.$    If   $\theta\in\big[\frac{\pi}{2},\frac{2\pi}{3}\big]$    the mapping $f:[0,1]\rightarrow[0,\infty)$  is strictly decreasing  and we get

\begin{eqnarray}\label{1td5fs0mb6nn2m}\int_0^1F(t)\frac{\frac{27}{50}-t-2t^{2n}}{(1+t)(1+t^2+2t\cos\theta)}dt\geq\frac{F(t_n)}{(1+t_n)(1+t_n^2+2t_n\cos\theta)}\int_0^1\Big({\frac{27}{50}-t}\nonumber\\-2{t^{2n}}\Big)dt>\frac{F(t_n)}{(1+t_n)(1+t_n^2+2t_n\cos\theta)}\Big(\frac{27}{50}-\frac{1}{2}-\frac{2}{2n+1}\Big)>0 \ \ \ \
\end{eqnarray}
where $t_n$  denotes the unique root of the equation $\frac{27}{50}-t-2t^{2n}=0,$       in the interval   $ \ (0,1).$\\

\begin{eqnarray}\label{2b3mx7v1q2}\int_0^1F(t)\Big(B(t,0)-B(t,\theta)\Big)dt \ \ \ \ =\int_0^1F(t)\frac{\frac{23}{50}(1-\cos{\theta})}{(1+t)(1+t^2+2t\cos\theta)}dt\nonumber\\+\int_0^1F(t)\frac{(t^{2n}+t^{2n-1})(1-\cos2n\theta)}{(1+t)(1+t^2+2t\cos\theta)}dt+\int_0^1F(t)\frac{(t^{2n}+t^{2n+1})(1-\cos(2n-1)\theta)}{(1+t)(1+t^2+2t\cos\theta)}dt\nonumber\\
+\int_0^1F(t)\frac{\big(\frac{27}{50}-t-2t^{2n}\big)(1-\cos\theta)}{(1+t)(1+t^2+2t\cos\theta)}dt v\ \ \ \  \ \  \end{eqnarray}
The equality   (\ref{2b3mx7v1q2})  and the inequality   (\ref{1td5fs0mb6nn2m})  imply (\ref{z4x1q2}).\\
In order to prove the second inequality, we remark that

\begin{eqnarray}\label{xzzxy4xvc44fg}  \int_0^1F(t)\Big(2+B(t,0)+B(t,\theta)\Big)dt= \int_0^1F(t)\Big(\frac{5}{2}+\frac{1+t^{2n-1}}{1+t}-\frac{1}{2}+B(t,\theta)\Big)dt\nonumber\\
= \int_0^1F(t)\Big(\frac{5}{2}+\frac{1+2t^{2n-1}-t}{2(1+t)}+\frac{t+\cos\theta+t^{2n}\cos(2n-1)\theta+t^{2n-1}\cos(2n\theta)}{1+t^2+2t\cos\theta}\Big)dt \ \ \ \ \\
= \int_0^1F(t)\Big(\frac{5}{2}+\frac{t+\cos\theta}{1+t^2+2t\cos\theta}+\frac{1+2t^{2n-1}-t}{2(1+t)}+\frac{t^{2n}\cos(2n-1)\theta+t^{2n-1}\cos(2n\theta)}{1+t^2+2t\cos\theta}\Big)dt\nonumber\\
= \int_0^1F(t)\Big(\frac{5}{2}+\frac{t+\cos\theta}{1+t^2+2t\cos\theta}+\frac{2t^{2n}(1+\cos(2n-1)\theta)+2t^{2n-1}(1+\cos(2n\theta))}{1+t^2+2t\cos\theta}\Big)dt\nonumber\\
+ \int_0^1F(t)\frac{2t^{2n-1}}{1+t}dt+ \int_0^1F(t)\Big(\frac{1-t}{2(1+t)}-\frac{t^{2n}(2+\cos(2n-1)\theta)+t^{2n-1}(2+\cos(2n\theta))}{1+t^2+2t\cos\theta}\Big)dt\nonumber
\end{eqnarray}

  We put  $g_1(t)=1-2t+2t^2-t^3-24t^{2n-1}$        $g_2(t)=2t^3-8t^{2n-1},$       and     $f(t)=\frac{F(t)}{2(1+t^3)}$  in the assertion  (b)   of Lemma 2,   and we get
\begin{eqnarray}\label{d2s1xcc4b3fm12nijf}
 \int_0^1F(t)\Big(\frac{1-t}{2(1+t)}-\frac{t^{2n}(2+\cos(2n-1)\theta)+t^{2n-1}(2+\cos(2n\theta))}{1+t^2+2t\cos\theta}\Big)dt\nonumber\\
\geq \int_0^1F(t)\Big(\frac{1-t}{2(1+t)}-\frac{3t^{2n}+3t^{2n-1}}{1+t^2+2t\cos\theta}\Big)dt\geq \int_0^1F(t)\Big(\frac{1-t}{2(1+t)}\\-\frac{6t^{2n-1}}{1+t^2-t}\Big)dt 
 \geq \int_0^1\frac{F(t)}{2(1+t^3)}\Big(1-2t+2t^2-t^3-12t^{2n-1}(1+t)\Big)dt \nonumber\\
\geq  \int_0^1\frac{F(t)}{2(1+t^3)}\Big(1-2t+2t^2-t^3-24t^{2n-1}\Big)dt \nonumber\\
  \frac{F(t^*)}{2(1+(t^*)^3)}    \int_0^1\Big(1-2t+2t^2-t^3-24t^{2n-1}\Big)dt\nonumber\\ = \frac{F(t^*)}{2(1+(t^*)^3)} \Big(\frac{5}{12}-\frac{12}{n} \Big)>0,     \ \    \theta\in\Big[\frac{\pi}{2},\frac{2\pi}{3}\Big].\nonumber
\end{eqnarray}
Finally equality (\ref{xzzxy4xvc44fg}),   Lemma 4    and  inequality  (\ref{d2s1xcc4b3fm12nijf})    imply   (\ref{q3sz3x61q2}),   and the proof is  done.
\end{proof}

\section{The Main Result}

\begin{theorem}
If $n$  is a natural number,  $n\geq52$     and    $\alpha\in(0,1)$   then the following inequality holds
\begin{equation}\label{us5n6sw}
|A_{2n-1}(\alpha,e^{i\theta})|\leq|A_{2n-1}(\alpha,1)|, \ \  \textrm{for  \  all} \ \theta\in[-\frac{\pi}{2},\frac{\pi}{2}].
\end{equation}
\end{theorem}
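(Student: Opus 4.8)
The plan is to use the integral representation of Lemma~1. Write $\Phi(\theta)=-\Gamma(\alpha)\Gamma(-\alpha)A_{2n-1}(\alpha,e^{i\theta})$ and $D=D(t,\theta):=1+t^2+2t\cos\theta$. Since $\Gamma(\alpha)\Gamma(-\alpha)<0$ for $\alpha\in(0,1)$, the assertion is equivalent to $|\Phi(\theta)|\le\Phi(0)$; because the coefficients of $A_{2n-1}(\alpha,\cdot)$ are real one has $\Phi(-\theta)=\overline{\Phi(\theta)}$, so it suffices to take $\theta\in[0,\frac\pi2]$, and $\Phi(0)=\int_0^1F(t)\big(\tfrac1\alpha+\tfrac{1+t^{2n-1}}{1+t}\big)dt$ is real and positive because $F>0$ on $(0,1)$. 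With $\mathrm{Re}\,\Phi(\theta)=\int_0^1F(t)(\tfrac1\alpha+B(t,\theta))dt$ and $\mathrm{Im}\,\Phi(\theta)=\int_0^1F(t)C(t,\theta)dt$, the inequality $\Phi(0)^2-|\Phi(\theta)|^2\ge0$ becomes
\[
\big(\Phi(0)-\mathrm{Re}\,\Phi(\theta)\big)\big(\Phi(0)+\mathrm{Re}\,\Phi(\theta)\big)\ \ge\ \big(\mathrm{Im}\,\Phi(\theta)\big)^2 ,
\]
in which the first factor equals $\int_0^1F(t)(B(t,0)-B(t,\theta))dt\ge0$ by Lemma~3(a), and the second equals $\int_0^1F(t)(\tfrac2\alpha+B(t,0)+B(t,\theta))dt>0$.

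To bound the right-hand side, set $N_\pm(t,\theta):=(1\pm\cos\theta)+t^{2n-1}(1\pm\cos2n\theta)+t^{2n}(1\pm\cos(2n-1)\theta)$ and use the pointwise estimate $|C(t,\theta)|\le\sqrt{N_-N_+}/D$, which follows from $\big|\sum_j a_j\sin\varphi_j\big|\le\sum_j\sqrt{a_j(1-\cos\varphi_j)}\,\sqrt{a_j(1+\cos\varphi_j)}\le\sqrt{\sum_j a_j(1-\cos\varphi_j)}\,\sqrt{\sum_j a_j(1+\cos\varphi_j)}$ with $(a_1,a_2,a_3)=(1,t^{2n-1},t^{2n})$ and $(\varphi_1,\varphi_2,\varphi_3)=(\theta,2n\theta,(2n-1)\theta)$. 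The Cauchy--Schwarz inequality applied to $\int_0^1F(t)|C(t,\theta)|\,dt$ with the splitting $\sqrt{N_-N_+}/D=\sqrt{N_-/(2(1+t)D)}\cdot\sqrt{2(1+t)N_+/D}$ then gives
\[
\big(\mathrm{Im}\,\Phi(\theta)\big)^2\ \le\ \Big(\int_0^1F(t)\frac{N_-}{2(1+t)D}\,dt\Big)\Big(\int_0^1F(t)\frac{2(1+t)N_+}{D}\,dt\Big).
\]
The weight $2(1+t)$ is chosen so that $\tfrac12N_-$ has leading coefficient $\tfrac12(1-\cos\theta)$: the first factor is then $\le\int_0^1F(t)\big(\tfrac12(1-\cos\theta)+t^{2n-1}(1-\cos2n\theta)+t^{2n}(1-\cos(2n-1)\theta)\big)/((1+t)D)\,dt$, which by Lemma~3(a) is $\le\Phi(0)-\mathrm{Re}\,\Phi(\theta)$.

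It remains to show that the second factor is at most $\Phi(0)+\mathrm{Re}\,\Phi(\theta)$; since $\tfrac2\alpha\ge2$ it is enough to prove $\int_0^1F(t)\tfrac{2(1+t)N_+}{D}\,dt\le\int_0^1F(t)(2+B(t,0)+B(t,\theta))\,dt$ (when $\theta=0$ the first Cauchy--Schwarz factor vanishes, so only $\theta\in(0,\tfrac\pi2]$ matters). I would split the left side into its main part (the $1\pm\cos\theta$ terms) and the two tails carrying $t^{2n-1},t^{2n}$, apply Lemma~3(b) to the summand $\int_0^1F(t)(1+B(t,\theta))dt$ of the right side to absorb the corresponding tails, and reduce after cancellation to an inequality $\int_0^1F(t)g_\theta(t)\,dt\ge0$ in which $g_\theta$ is positive near $t=0$ and negative near $t=1$ (the change of sign stems from the remaining coefficients being $O(t^{2n-1})$, while the main part is controlled by a rational inequality of the same type as Lemma~4). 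Lemma~2(b) turns this into $F(t_0)\int_0^1g_\theta(t)\,dt\ge0$, and $\int_0^1g_\theta$ is bounded below via $\int_0^1t^m\,dt=\tfrac1{m+1}$ and $\int_0^1\tfrac{t^m}{1+t}\,dt\le\tfrac{\ln2}{m+1}$; the resulting numerical inequalities in $n$ close for $n\ge52$. Putting the three estimates together gives $(\mathrm{Im}\,\Phi(\theta))^2\le(\Phi(0)-\mathrm{Re}\,\Phi(\theta))(\Phi(0)+\mathrm{Re}\,\Phi(\theta))$, i.e.\ $|\Phi(\theta)|\le\Phi(0)$, as required. \textbf{The main obstacle is this last inequality:} it has to hold uniformly in $\alpha\in(0,1)$ and remain valid as $\theta\to0^+$, where the main part degenerates like $\theta^2$ and (for $\alpha$ close to $1$) the density $F$ concentrates near $t=0$; the algebraic identity that makes the factor $2$ lost in the Cauchy--Schwarz splitting reappear is, in the limit, $\int_0^1F(t)\,dt\ge\int_0^1\tfrac{F(t)}{1+t}\,dt$, combined with the monotonicity of $t\mapsto F(t)/D$ used to apply Lemma~2(b).
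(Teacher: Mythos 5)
Your reduction to $\big(\Phi(0)-\mathrm{Re}\,\Phi(\theta)\big)\big(\Phi(0)+\mathrm{Re}\,\Phi(\theta)\big)\ge\big(\mathrm{Im}\,\Phi(\theta)\big)^2$, the pointwise bound $|C(t,\theta)|\le\sqrt{N_-N_+}/D$, and the estimate of the first Cauchy--Schwarz factor by $\Phi(0)-\mathrm{Re}\,\Phi(\theta)$ via Lemma~3(a) are all sound (halving the $t^{2n-1},t^{2n}$ coefficients even leaves slack there). The gap is the second factor. Your plan requires
\[
\int_0^1F(t)\,\frac{2(1+t)N_+(t,\theta)}{1+t^2+2t\cos\theta}\,dt\ \le\ \int_0^1F(t)\big(2+B(t,0)+B(t,\theta)\big)\,dt ,\qquad \theta\in\Big(0,\tfrac\pi2\Big],
\]
and this is only sketched. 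It is genuinely delicate: the inequality fails pointwise --- already at $\theta=0$ the difference of the two integrands equals $\frac{2t-2t^{2n-1}-4t^{2n}}{1+t}$, which is $-2$ at $t=1$ --- so for small $\theta$ it can only hold after integration against $F$, via a Lemma~2(b) argument combined with a rational inequality of Lemma~4 type valid on $[0,\frac\pi2]$ (the paper proves such an inequality only on $[\frac\pi2,\frac{2\pi}3]$, for Theorem~2). None of this is carried out, and you flag it yourself as ``the main obstacle''; as it stands the proof is incomplete.

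It is worth noting that the paper's own argument never needs an upper bound on the ``sum'' factor $\int_0^1F(t)(\frac2\alpha+B(t,0)+B(t,\theta))\,dt$. It runs Cauchy--Schwarz in the opposite direction: after reducing to $2\int F(1+B)\,dt\int F(B(\cdot,0)-B)\,dt-(\int FC\,dt)^2\ge0$, it bounds the product from below by $2\big(\int F\sqrt{(1+B)(B(\cdot,0)-B)}\,dt\big)^2$, inserts the two minorants of Lemma~3 under the single square root, and applies the three-term inequality $\sqrt{\sum a_j^2}\sqrt{\sum b_j^2}\ge\sum|a_jb_j|$ so that $\sqrt{(1-\cos\varphi_j)(1+\cos\varphi_j)}=|\sin\varphi_j|$ reproduces the terms of $|C|$; the leading factor $2$ exactly absorbs the $\sqrt{(1+t)/2}$ mismatch. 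That route closes with only the two one-sided integral bounds of Lemma~3, both of which point in the ``easy'' direction. To complete your variant you must actually prove the displayed inequality; otherwise the final step should be restructured along the paper's lines.
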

\begin{proof}
According to  (\ref{234n234})  and   (\ref{a3x234n2x34})    the inequality (\ref{us5n6sw})  is equivalent to  \begin{equation}\label{lfd8sn}|\Phi(0)|^2\geq|\Phi(\theta)|^2,   \ \theta\in[-\pi,\pi]\Leftrightarrow  |\Phi(0)|^2-|\Phi(\theta)|^2\geq0,   \ \theta\in[-\frac{\pi}{2},\frac{\pi}{2}].\end{equation}
We denote $$B(t,\theta)=\frac{\cos\theta+t+t^{2n-1}\cos2n\theta+t^{2n}\cos(2n-1)\theta}{1+t^2+2t\cos\theta},$$
 $$C(t,\theta)=\frac{\sin\theta+t^{2n-1}\sin2n\theta+t^{2n}\sin(2n-1)\theta}{1+t^2+2t\cos\theta}.$$
The equality   (\ref{a3x234n2x34})  implies that
\begin{eqnarray}\label{b5x3yo04d6z}
\Phi^2(0)-|\Phi(\theta)|^2=\int_0^1\int_0^1F(t)F(v)\Big[\Big(\frac{1}{\alpha}+B(t,0)\Big)\Big(\frac{1}{\alpha}+B(v,0)\Big) \\-\Big(\frac{1}{\alpha}+B(t,\theta)\Big)\Big(\frac{1}{\alpha}+B(v,\theta)\Big)-C(t,\theta)C(v,\theta)\Big]dtdv\nonumber\\
=\int_0^1\int_0^1F(t)F(v)\Big[\frac{1}{\alpha}\Big(B(t,0)-B(t,\theta)\Big)+\frac{1}{\alpha}\Big(B(v,0)-B(v,\theta)\Big)\nonumber\\+B(t,0)B(v,0)-B(t,\theta)B(v,\theta)-C(t,\theta)C(v,\theta)\Big]dtdv.\nonumber
\end{eqnarray}
It is easily seen that  $\Phi^2(0)-|\Phi(\theta)|^2$ is an even function with respect to $\theta,$ consequently we have to prove (\ref{lfd8sn})  only for  $\theta\in[0,\frac{\pi}{2}].$\\
Lemma 3,  (a)  implies that in case    $\theta\in[0,\frac{\pi}{2}],$  the following  inequality holds     $\int_0^1F(t)\Big(B(t,0)-{B}(t,\theta)\Big)dt\geq0.$
Thus we infer that
\begin{eqnarray}\label{f5b5xy4d6z}
\Phi^2(0)-|\Phi(\theta)|^2=\int_0^1\int_0^1F(t)F(v)\Big[\Big(B(t,0)-B(t,\theta)\Big)\Big(1+B(v,\theta)\Big)\nonumber\\+\Big(B(v,0)-B(v,\theta)\Big)\Big(1+B(t,0)\Big)\Big]dtdv-\int_0^1\int_0^1F(t)F(v)C(t,\theta)C(v,\theta)dtdv\nonumber\\
+\int_0^1\int_0^1F(t)F(v)\Big(B(t,0)-B(t,\theta)\Big)\Big(B(v,0)-B(v,\theta)\Big)dtdv\nonumber\\=\int_0^1\int_0^1F(t)F(v)\Big[\Big(B(t,0)-B(t,\theta)\Big)\Big(1+B(v,\theta)\Big)\nonumber\\+\Big(B(v,0)-B(v,\theta)\Big)\Big(1+B(t,0)\Big)\Big]dtdv-\int_0^1\int_0^1F(t)F(v)C(t,\theta)C(v,\theta)dtdv\nonumber\\
+\int_0^1F(t)\Big(B(t,0)-B(t,\theta)\Big)dt\int_0^1F(v)\Big(B(v,0)-B(v,\theta)\Big)dv\nonumber\\\nonumber\\\geq\int_0^1\int_0^1F(t)F(v)\Big[\Big(B(t,0)-B(t,\theta)\Big)\Big(1+B(v,\theta)\Big)\nonumber\\+\Big(B(v,0)-B(v,\theta)\Big)\Big(1+B(t,0)\Big)\Big]dtdv-\int_0^1\int_0^1F(t)F(v)C(t,\theta)C(v,\theta)dtdv\nonumber
\end{eqnarray}
This inequality  is equivalent to
\begin{eqnarray}\label{w1w2c4x2zz9}
\Phi^2(0)-|\Phi(\theta)|^2\geq\int_0^1F(t)\Big(B(t,0)-B(t,\theta)\Big)dt\int_0^1F(v)\Big(1+B(v,\theta)\Big)dv\nonumber\\+
\int_0^1F(t)\Big(1+B(t,\theta)\Big)dt\int_0^1F(v)\Big(B(v,0)-B(v,\theta)\Big)dv \ \ \ \ \ \ \\-\int_0^1\int_0^1F(t)F(v)C(t,\theta)C(v,\theta)dtdv \ \ \ \ \ \  \nonumber
\end{eqnarray}
The inequality between the arithmetic and geometric  means  leads to
\begin{eqnarray}\label{111w1w2c4x2zz9}
\int_0^1F(t)\Big(B(t,0)-B(t,\theta)\Big)dt\int_0^1F(v)\Big(1+B(v,\theta)\Big)dv\nonumber\\+
\int_0^1F(t)\Big(1+B(t,\theta)\Big)dt\int_0^1F(v)\Big(B(v,0)-B(v,\theta)\Big)dv \ \ \ \ \ \ \\-\int_0^1\int_0^1F(t)F(v)C(t,\theta)C(v,\theta)dtdv \ \ \ \ \ \  \nonumber\\
\geq2\Big[\int_0^1F(t)\Big(B(t,0)-B(t,\theta)\Big)dt\int_0^1F(v)\Big(1+B(v,\theta)\Big)dv\nonumber\\
\int_0^1F(t)\Big(1+B(t,\theta)\Big)dt\int_0^1F(v)\Big(B(v,0)-B(v,\theta)\Big)dv\Big]^{\frac{1}{2}} \nonumber \\-\int_0^1\int_0^1F(t)F(v)C(t,\theta)C(v,\theta)dtdv\nonumber\\
=2\int_0^1F(t)\Big(1+B(t,\theta)\Big)dt\int_0^1F(t)\Big(B(t,0)-B(t,\theta)\Big)dt \nonumber \\-\Big(\int_0^1F(t)C(t,\theta)dt\Big)^2\nonumber
\end{eqnarray}
The Cauchy-Schwarz  inequality for integrals implies
\begin{eqnarray}\label{wndlkoop}
2\int_0^1F(t)\Big(1+B(t,\theta)\Big)dt\int_0^1F(t)\Big(B(t,0)-B(t,\theta)\Big)dt \ \ \ \ \ \  \nonumber \\-\Big(\int_0^1F(t)C(t,\theta)dt\Big)^2
\geq2\Big\{\int_0^1F(t)\Big[\Big(1+B(t,\theta)\Big)\Big(B(t,0)-B(t,\theta)\Big)\Big]^{\frac{1}{2}}dt\Big\}^2 \ \ \ \ \\
-\Big(\int_0^1F(t)C(t,\theta)dt\Big)^2 \ \ \ \ \ \nonumber
\end{eqnarray}
Lemma 3 implies
\begin{eqnarray}\label{ed55f}
2\Big\{\int_0^1F(t)\Big[\Big(1+B(t,\theta)\Big)\Big(B(t,0)-B(t,\theta)\Big)\Big]^{\frac{1}{2}}dt\Big\}^2 \ \ \ \ \\
-\Big(\int_0^1F(t)C(t,\theta)dt\Big)^2\geq\nonumber\\
2\Big(\int_0^1F(t)\sqrt{\frac{\frac{1}{2}(1-\cos{\theta})+t^{2n-1}(1-\cos2n\theta)+t^{2n}(1-\cos(2n-1)\theta)}{(1+t)(1+t^2+2t\cos\theta)}}\nonumber\\
\sqrt{\frac{(1+t)(1+\cos{\theta})+t^{2n-1}(1+\cos2n\theta)+t^{2n}(1+\cos(2n-1)\theta)}{1+t^2+2t\cos\theta}}dt\Big)^2\nonumber\\
-\Big(\int_0^1F(t)C(t,\theta)dt\Big)^2.\nonumber
\end{eqnarray}

Putting $a_1^2=\frac{\frac{1}{2}(1-\cos{\theta})}{(1+v)(1+v^2+2v\cos\theta)},$    \  $b_1^2=\frac{(1+t)(1+\cos{\theta})}{1+v^2+2v\cos\theta}$  and so an, in the inequality
$$\sqrt{a_1^2+a_2^2+a_3^2}\sqrt{b_1^2+b_2^2+b_3^2}\geq{|a_1b_1|+|a_2b_2|+|a_3b_3|},$$
we get
\begin{eqnarray}\label{xcs3bnm3xcxc2}
2\Big(\int_0^1F(t)\sqrt{\frac{\frac{1}{2}(1-\cos{\theta})+t^{2n-1}(1-\cos2n\theta)+t^{2n}(1-\cos(2n-1)\theta)}{(1+t)(1+t^2+2t\cos\theta)}}\nonumber\\
\sqrt{\frac{(1+t)(1+\cos{\theta})+t^{2n-1}(1+\cos2n\theta)+t^{2n}(1+\cos(2n-1)\theta)}{1+t^2+2t\cos\theta}}dt\Big)^2\\
-\Big(\int_0^1F(t)C(t,\theta)dt\Big)^2\geq\nonumber\\
2\Big(\int_0^1F(t)\frac{\sqrt{\frac{1}{2}(1+t)}|\sin\theta|+t^{2n-1}|\sin2n\theta|+t^{2n}|\sin(2n-1)\theta|}{(1+t^2+2t\cos\theta)\sqrt{1+t}}
dt\Big)^2\nonumber\\
-\Big(\int_0^1F(t)C(t,\theta)dt\Big)^2.\nonumber
\end{eqnarray}
On the other hand we have
\begin{eqnarray}\label{r5txcvcckxcxc2}
2\Big(\int_0^1F(t)\frac{\sqrt{\frac{1}{2}(1+t)}|\sin\theta|+t^{2n-1}|\sin2n\theta|+t^{2n}|\sin(2n-1)\theta|}{(1+t^2+2t\cos\theta)\sqrt{1+t}}
dt\Big)^2\\
-\Big(\int_0^1F(t)C(t,\theta)dt\Big)^2\geq\nonumber\\
\Big(\int_0^1F(t)\frac{\sqrt{(1+t)}|\sin\theta|+\sqrt{2}t^{2n-1}|\sin2n\theta|+\sqrt{2}t^{2n}|\sin(2n-1)\theta|}{(1+t^2+2t\cos\theta)\sqrt{1+t}}
dt\Big)^2\nonumber\\
-\Big(\int_0^1F(t)C(t,\theta)dt\Big)^2\geq\nonumber\\
\Big(\int_0^1F(t)\frac{|\sin\theta|+t^{2n-1}|\sin2n\theta|+t^{2n}|\sin(2n-1)\theta|}{1+t^2+2t\cos\theta}
dt\Big)^2\nonumber\\
-\Big(\int_0^1F(t)\frac{\sin\theta+t^{2n-1}\sin2n\theta+t^{2n}\sin(2n-1)\theta}{1+t^2+2t\cos\theta}dt\Big)^2\geq0, \ \ \theta\in[0,\frac{\pi}{2}],\nonumber
\end{eqnarray}
Finally the inequalities  (\ref{w1w2c4x2zz9}), (\ref{111w1w2c4x2zz9}), (\ref{wndlkoop}), (\ref{ed55f}),  (\ref{xcs3bnm3xcxc2})  and   (\ref{r5txcvcckxcxc2})  imply that
$$
\Phi^2(0)-|\Phi(\theta)|^2\geq \ \theta\in[0,\frac{\pi}{2}],\nonumber
$$ and consequently  inequalities   (\ref{lfd8sn})      and     (\ref{us5n6sw})     hold in case $\theta\in[-\frac{\pi}{2},\frac{\pi}{2}].$\\
\end{proof}
\begin{theorem}
If $n$  is a natural number,  $n\geq51$     and    $\alpha\in(0,1),$   then the following inequality holds
\begin{equation}\label{usxzs5n6sw}
|A_{2n-1}(\alpha,e^{i\theta})|\leq|A_{2n-1}(\alpha,1)|, \ \  \textrm{for  \  all} \ \theta\in[-\frac{2\pi}{3},-\frac{\pi}{2}]\cup[\frac{\pi}{2},\frac{2\pi}{3}]
\end{equation}
\end{theorem}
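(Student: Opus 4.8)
The plan is to run the argument of Theorem 1 almost verbatim, substituting Lemma 5 for Lemma 3 throughout, since (\ref{z4x1q2}) and (\ref{q3sz3x61q2}) are precisely the variants of Lemma 3 valid on $\theta\in[\frac{\pi}{2},\frac{2\pi}{3}]$. As in the proof of Theorem 1, (\ref{usxzs5n6sw}) is equivalent to $\Phi^2(0)-|\Phi(\theta)|^2\ge0$; this quantity is even in $\theta$, so it suffices to treat $\theta\in[\frac{\pi}{2},\frac{2\pi}{3}]$, where the hypothesis $n\ge27$ of Lemma 5 is met since $n\ge51$.

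Set $a(t):=B(t,0)-B(t,\theta)$. Using the symmetric identity $B(t,0)B(v,0)-B(t,\theta)B(v,\theta)=\frac12 a(t)\big(B(v,0)+B(v,\theta)\big)+\frac12 a(v)\big(B(t,0)+B(t,\theta)\big)$, the integrand appearing in (\ref{a3x234n2x34}) for $\Phi^2(0)-|\Phi(\theta)|^2$ rewrites as
$$\tfrac12 a(t)\big(2+B(v,0)+B(v,\theta)\big)+\tfrac12 a(v)\big(2+B(t,0)+B(t,\theta)\big)+\big(\tfrac1\alpha-1\big)\big(a(t)+a(v)\big)-C(t,\theta)C(v,\theta).$$
By (\ref{z4x1q2}) and the positivity of $F$ and of $1+t^2+2t\cos\theta$ on $[0,1]$, together with $1-\cos\theta\ge0$, $1-\cos2n\theta\ge0$, $1-\cos(2n-1)\theta\ge0$, one has $\int_0^1 F(t)a(t)\,dt\ge0$; since $\frac1\alpha\ge1$ and $\int_0^1 F>0$, the $(\frac1\alpha-1)$-term integrates to a nonnegative number and may be discarded. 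Integrating what remains, using the $t\leftrightarrow v$ symmetry and $\int_0^1\!\!\int_0^1 F(t)F(v)C(t,\theta)C(v,\theta)\,dt\,dv=\big(\int_0^1 F(t)C(t,\theta)\,dt\big)^2$, one obtains
$$\Phi^2(0)-|\Phi(\theta)|^2\ \ge\ \Big(\int_0^1 F(t)a(t)\,dt\Big)\Big(\int_0^1 F(v)\big(2+B(v,0)+B(v,\theta)\big)\,dv\Big)-\Big(\int_0^1 F(t)C(t,\theta)\,dt\Big)^2.$$
Here $2+B(t,0)+B(t,\theta)$ plays the role that $1+B(t,\theta)$ played in Theorem 1; this is exactly the quantity estimated in (\ref{q3sz3x61q2}), and this is why Lemma 5 is phrased that way: for $\theta\in[\frac{\pi}{2},\frac{2\pi}{3}]$ the factor $1+B(t,\theta)$ need not be nonnegative, so the symmetrization is forced.

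Both factors of the product are now bounded below by the right-hand sides of (\ref{z4x1q2}) and (\ref{q3sz3x61q2}); writing those right-hand sides as $\int_0^1 F(t)g_1(t)\,dt$ and $\int_0^1 F(v)g_2(v)\,dv$ with $g_1,g_2\ge0$, and using that both bounds are nonnegative, the product dominates $\big(\int_0^1 F(t)g_1(t)\,dt\big)\big(\int_0^1 F(v)g_2(v)\,dv\big)$. Applying the Cauchy--Schwarz inequality in the form $\big(\int Fg_1\big)\big(\int Fg_2\big)\ge\big(\int F\sqrt{g_1g_2}\big)^2$ (as in (\ref{wndlkoop})), then the elementary inequality $\sqrt{(a_1^2+a_2^2+a_3^2)(b_1^2+b_2^2+b_3^2)}\ge|a_1b_1|+|a_2b_2|+|a_3b_3|$ with $a_1^2=\frac{27}{50}(1-\cos\theta)$, $b_1^2=\frac{50}{27}(1+t)(1+\cos\theta)$ and the analogous choices for the $t^{2n-1}$, $t^{2n}$ terms (whose $b$-factors carry an extra $2$), one gets $a_1b_1=\sqrt{1+t}\,|\sin\theta|$, $a_2b_2=\sqrt2\,t^{2n-1}|\sin2n\theta|$, $a_3b_3=\sqrt2\,t^{2n}|\sin(2n-1)\theta|$, the constants $\frac{27}{50}$ and $\frac{50}{27}$ being tuned exactly for this cancellation. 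Since $\sqrt2/\sqrt{1+t}\ge1$ on $[0,1]$ and by the triangle inequality, this leads to $\int_0^1 F(t)\sqrt{g_1(t)g_2(t)}\,dt\ge\big|\int_0^1 F(t)C(t,\theta)\,dt\big|$, exactly as in (\ref{xcs3bnm3xcxc2})--(\ref{r5txcvcckxcxc2}). Squaring and assembling the chain of inequalities gives $\Phi^2(0)-|\Phi(\theta)|^2\ge0$ on $[\frac{\pi}{2},\frac{2\pi}{3}]$, hence, by evenness, (\ref{usxzs5n6sw}).

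The only step that is not pure bookkeeping is the choice of the decomposition of $B(t,0)B(v,0)-B(t,\theta)B(v,\theta)$: it must simultaneously (i) leave the residual $\frac1\alpha$-dependence as a nonnegative multiple of $\int_0^1 F(t)a(t)\,dt$, which is nonnegative thanks to (\ref{z4x1q2}), and (ii) produce exactly the factor $2+B(v,0)+B(v,\theta)$, which (\ref{q3sz3x61q2}) controls. The symmetric decomposition above does both; everything after it mirrors the end of the proof of Theorem 1, with $\frac12$, $1$, $t^{2n-1}$, $t^{2n}$ there replaced by $\frac{27}{50}$, $\frac{50}{27}$, $2t^{2n-1}$, $2t^{2n}$ here.
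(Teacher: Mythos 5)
Your proposal is correct and follows essentially the same route as the paper: the symmetric decomposition producing the factor $2+B(v,0)+B(v,\theta)$ (discarding the nonnegative $(\tfrac{1}{\alpha}-1)$-term) is exactly the paper's passage from (\ref{d0c0bb}) to (\ref{kx13eed0sc0bb}), and the subsequent use of Lemma 5 followed by the two Cauchy--Schwarz steps mirrors (\ref{yhnx13eed0sc0bb})--(\ref{xcxcxc2}). Your constants $\tfrac{27}{50}$, $\tfrac{50}{27}$ are the ones consistent with Lemma 5 (the paper writes $\tfrac{12}{25}$, $\tfrac{25}{12}$ in (\ref{yhnx13eed0sc0bb}), apparently a slip that does not affect the cancellation since the product is $1$ either way).
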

\begin{proof}
Equality   (\ref{b5x3yo04d6z})  can be rewritten as follows
\begin{eqnarray}\label{d0c0bb}2\Big(\Phi^2(0)-|\Phi(\theta)|^2\Big) \ \ \nonumber\\=\int_0^1F(t)\Big(B(t,0)-B(t,\theta)\Big)dt\int_0^1F(v)\Big(\frac{2}{\alpha}+B(v,0) +B(v,\theta)\Big)dv \ \  \ \\+\int_0^1F(v)\Big(B(v,0)-B(v,\theta)\Big)dv\int_0^1F(t)\Big(\frac{2}{\alpha}+B(t,0)+B(t,\theta)\Big)\Big]dt \ \ \  \nonumber\\-2\int_0^1\int_0^1F(t)F(v)C(t,\theta)C(v,\theta)dtdv. \ \ \ \nonumber
\end{eqnarray}
Since     $2\Big(\Phi^2(0)-|\Phi(\theta)|^2\Big)$  defines an even function  in order to prove   (\ref{usxzs5n6sw})   it is enough to show that
\begin{equation}
\Phi^2(0)-|\Phi(\theta)|^2\geq0, \ \ \theta\in\Big[\frac{\pi}{2},\frac{2\pi}{3}\Big].
\end{equation}
The inequality between the arithmetic and geometric  means   and the condition   $\alpha\in(0,1]$   imply
\begin{eqnarray}\label{kx13eed0sc0bb}2\Big(\Phi^2(0)-|\Phi(\theta)|^2\Big)\geq2\Big[\int_0^1F(t)\Big(B(t,0)-B(t,\theta)\Big)dt\int_0^1F(v)\Big(2+B(v,0) \ \ \ \ \ \ \ \ \ \\ +B(v,\theta)\Big)dv\int_0^1F(v)\Big(B(v,0)-B(v,\theta)\Big)dv\int_0^1F(t)\Big(2+B(t,0)+B(t,\theta)\Big)dt\Big]^{\frac{1}{2}} \ \ \ \ \ \ \nonumber\\-2\int_0^1\int_0^1F(t)F(v)C(t,\theta)C(v,\theta)dtdv \ \ \ \ \ \ \ \ \ \ \ \ \ \ \  \nonumber
\nonumber\\=2\int_0^1F(t)\Big(B(t,0)-B(t,\theta)\Big)dt\int_0^1F(t)\Big(2+B(t,0)+B(t,\theta\Big)dt \ \ \ \ \ \  \ \ \ \ \nonumber\\-2\Big(\int_0^1F(t)C(t,\theta)dt\Big)^2 \ \ \ \ \ \ \ \ \nonumber\end{eqnarray}
Lemma 5 and  inequality  (\ref{kx13eed0sc0bb})  lead to
\begin{eqnarray}\label{yhnx13eed0sc0bb}\Phi^2(0)-|\Phi(\theta)|^2\nonumber\\\geq\int_0^1F(t)\frac{\frac{12}{25}(1-\cos{\theta})+t^{2n-1}(1-\cos2n\theta)+t^{2n}(1-\cos(2n-1)\theta)}{(1+t)(1+t^2+2t\cos\theta)}dt \ \ \ \\
\int_0^1F(t) \frac{\frac{25}{12}(1+t)(1+\cos{\theta})+2t^{2n-1}(1+\cos2n\theta)+2t^{2n}(1+\cos(2n-1)\theta)}{1+t^2+2t\cos\theta}dt\nonumber\\-\Big(\int_0^1F(t)C(t,\theta)dt\Big)^2\nonumber\end{eqnarray}
We apply twice the  Cauchy-Schwarz inequality    jest like  in the proof of the previous theorem  and we get that  in order to prove  (\ref{yhnx13eed0sc0bb}) it is enough to show that
\begin{eqnarray}\label{yhnx13evcxed0sc0bb}\Phi^2(0)-|\Phi(\theta)|^2\nonumber\\\geq\Big(\int_0^1F(t)\frac{\sqrt{1+t}|\sin{\theta}|+\sqrt{2}t^{2n-1}|\sin{2n}\theta|+\sqrt{2}t^{2n}|\sin({2n-1})\theta|}{(1+t^2+2t\cos\theta)\sqrt{1+t}}dt\Big)^2\nonumber\\ -\Big(\int_0^1F(t)\frac{\sin\theta+t^{2n-1}\sin2n\theta+t^{2n}\sin(2n-1)\theta}{1+t^2+2t\cos\theta}dt\Big)^2\nonumber\end{eqnarray}
This inequality is equivalent to
\begin{eqnarray}\label{xcxcxc2}
\Phi^2(0)-|\Phi(\theta)|^2\geq\nonumber\\
\Big(\int_0^1F(t)\frac{|\sin\theta|}{1+t^2+2t\cos\theta}dt+\int_0^1F(t)\frac{\sqrt{2}t^{2n-1}|\sin2n\theta|}{(1+t^2+2t\cos\theta)\sqrt{1+t}}dt \ \ \\+\int_0^1F(t)\frac{\sqrt{2}t^{2n}|\sin(2n-1)\theta|}{(1+t^2+2t\cos\theta)\sqrt{1+t}}
dt\Big)^2\nonumber\\
-\Big(\int_0^1F(t)\frac{\sin\theta}{1+t^2+2t\cos\theta}dt+\int_0^1F(t)\frac{t^{2n-1}\sin2n\theta}{(1+t^2+2t\cos\theta)}dt\nonumber\\+\int_0^1F(t)\frac{t^{2n}\sin(2n-1)\theta}{(1+t^2+2t\cos\theta)\sqrt{1+t}}
dt\Big)^2\geq0, \ \   \theta\in\Big[\frac{\pi}{2},\frac{2\pi}{3}\Big].\nonumber
\end{eqnarray}
and the proof is done.
\end{proof}
\begin{theorem}
If $n$  is a natural number,  $n\geq27,$     and    $\alpha\in[\frac{1}{3},1),$   then the following inequality holds
\begin{equation}\label{uscsaxzs5n6sw}
{A_{2n-1}(\alpha,1)}\geq|A_{2n-1}(\alpha,e^{i\theta})|, \ \  \textrm{for  \  all} \ \theta\in[-\pi,-\frac{2\pi}{3}]\cup[\frac{2\pi}{3},\pi]
\end{equation}
\end{theorem}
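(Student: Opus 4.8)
The plan is to reproduce, on the arc $\theta\in\big[\frac{2\pi}{3},\pi\big]$, the scheme of Theorems~1 and~2, the new ingredient being a substitute for Lemmas~4 and~5 valid when $\cos\theta\in[-1,-\frac12]$. First, since $-\Gamma(\alpha)\Gamma(-\alpha)>0$ for $\alpha\in(0,1)$, and since $F\ge0$ on $[0,1]$ together with $B(t,0)=\frac{1+t^{2n-1}}{1+t}\ge0$ give $\Phi(0)>0$, inequality (\ref{uscsaxzs5n6sw}) is equivalent to $\Phi^{2}(0)-|\Phi(\theta)|^{2}\ge0$; as this quantity is even in $\theta$, one may assume $\theta\in\big[\frac{2\pi}{3},\pi\big]$. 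From (\ref{d0c0bb}), whose two product terms coincide after interchanging the integration variables,
$$\Phi^{2}(0)-|\Phi(\theta)|^{2}=\int_{0}^{1}F(t)\big(B(t,0)-B(t,\theta)\big)\,dt\int_{0}^{1}F(t)\Big(\tfrac{2}{\alpha}+B(t,0)+B(t,\theta)\Big)\,dt-\Big(\int_{0}^{1}F(t)C(t,\theta)\,dt\Big)^{2}.$$

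One then wants lower bounds $\int_{0}^{1}F(B(\cdot,0)-B(\cdot,\theta))\ge\int_{0}^{1}Fp(\cdot,\theta)$ and $\int_{0}^{1}F(\tfrac{2}{\alpha}+B(\cdot,0)+B(\cdot,\theta))\ge\int_{0}^{1}Fq(\cdot,\theta)$ with $p,q\ge0$: then, exactly as in Theorem~2, the arithmetic--geometric mean inequality, two applications of the Cauchy--Schwarz inequality, the inequality $\sqrt{\sum a_{i}^{2}}\sqrt{\sum b_{i}^{2}}\ge\sum|a_{i}b_{i}|$, and the trivial estimate $\sqrt{2}\ge\sqrt{1+t}$ on $[0,1]$ reduce the theorem to an elementary pointwise comparison between $p$, $q$ and $C(\cdot,\theta)$ (cf. (\ref{xcs3bnm3xcxc2})--(\ref{r5txcvcckxcxc2})), provided $p$ and $q$ are chosen of the form $p=\frac{c_{1}(1-\cos\theta)+t^{2n-1}(1-\cos2n\theta)+t^{2n}(1-\cos(2n-1)\theta)}{(1+t)(1+t^{2}+2t\cos\theta)}$ and a companion $q$ carrying a factor $c_{2}(1+t)(1+\cos\theta)$ and coefficient $2$ on the terms in $t^{2n-1},t^{2n}$, with $c_{1}c_{2}\ge1$. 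The bound for the first factor is produced just as in (\ref{x7v1q2})--(\ref{x1mn7v1q2}): write $B(t,0)-B(t,\theta)$ as a sum of manifestly nonnegative terms plus a remainder $\int_{0}^{1}F(t)\frac{(c_{1}-t-2t^{2n})(1-\cos\theta)}{(1+t)(1+t^{2}+2t\cos\theta)}\,dt$, which is nonnegative by Lemma~2(b) — the weight $\frac{F(t)}{1+t^{2}+2t\cos\theta}$ stays decreasing on $[0,1]$ for $\cos\theta\le-\frac12$ — together with the crude estimate $\int_{0}^{1}\frac{t^{2n}}{1+t}\,dt\le\frac{\ln2}{2n+1}$ of (\ref{1234}); the hypothesis $n\ge27$ is precisely what makes the surviving numerical constant positive.

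I expect the main obstacle to be the passage to the endpoint $\theta=\pi$. On $\big[\frac{\pi}{2},\frac{2\pi}{3}\big]$ the kernel $1+t^{2}+2t\cos\theta$ is bounded away from $0$, which is what lets one in Theorem~2 first replace $\tfrac{2}{\alpha}$ by $2$ and then invoke Lemma~5; on $\big[\frac{2\pi}{3},\pi\big]$ it degenerates to $(1-t)^{2}$ as $\theta\to\pi$, $B(t,\pi)=-(1+t+\cdots+t^{2n-2})$ is large, and the $p,q$ above acquire non‑integrable singularities at $(t,\theta)=(1,\pi)$, so near $\pi$ both the choice of $q$ and the way it is estimated must be modified (plausibly by trading the kernel $1+t^{2}+2t\cos\theta$ for a bounded weight of the type $\frac{1}{1+t^{3}}$ already used in the proof of Lemma~5). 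Moreover $\tfrac{2}{\alpha}$ can no longer be dropped: one checks that $\int_{0}^{1}F(t)(2+B(t,0)+B(t,\pi))\,dt$ becomes negative for large $n$, whereas $\int_{0}^{1}F(t)(\tfrac{2}{\alpha}+B(t,0)+B(t,\pi))\,dt=\Phi(0)+\Phi(\pi)$ stays positive; and since $\tfrac{2}{\alpha}+B(t,0)=\big(\tfrac{2}{\alpha}+\tfrac12\big)+\tfrac{1+2t^{2n-1}-t}{2(1+t)}$, the constant $\tfrac52$ of Lemma~4 is replaced by $\tfrac{2}{\alpha}+\tfrac12$, and keeping the ensuing numerics under control forces $\tfrac{1}{\alpha}\le3$, i.e. $\alpha\ge\tfrac13$ — this is the origin of the hypothesis on $\alpha$. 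The endpoint itself is clean: by (\ref{n8m9nn}),
$$A_{2n-1}(\alpha,1)\pm A_{2n-1}(\alpha,-1)=\sum_{k=0}^{2n-1}\frac{(-\alpha)_{k}}{k!}\big((-1)^{k}\pm1\big),$$
and since $\frac{(-\alpha)_{k}}{k!}<0$ for every $k\ge1$ while $\sum_{j\ge0}\frac{(-\alpha)_{2j}}{(2j)!}=2^{\alpha-1}$ (add the binomial series for $(1-z)^{\alpha}$ and $(1+z)^{\alpha}$ and let $z\to1$), both sign choices give a strictly positive quantity, so $|A_{2n-1}(\alpha,-1)|<A_{2n-1}(\alpha,1)$ for all $\alpha\in(0,1)$ and all $n$; the real work is interpolating between this endpoint and $\theta=\frac{2\pi}{3}$, where Theorem~2's argument is already available. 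Finally, combined with the case $2n-1\le51$ of \cite{6}, Theorem~3 settles Brannan's conjecture for $\alpha\in\big[\tfrac13,1\big)$ and $\theta\in\big[-\pi,-\tfrac{2\pi}{3}\big]\cup\big[\tfrac{2\pi}{3},\pi\big]$ for every $n$.
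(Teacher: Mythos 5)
Your proposal diverges from the paper at the decisive point, and the divergence is where the gap lies. The paper's proof of Theorem~3 does \emph{not} continue the $\Phi$, $B$, $C$ machinery of Theorems~1 and~2 onto $[\frac{2\pi}{3},\pi]$: it abandons the integral representation entirely and instead writes $(1+e^{i\theta})^\alpha$ via Taylor's formula with integral remainder, so that $|A_{2n-1}(\alpha,e^{i\theta})|\le|1+e^{i\theta}|^\alpha+\alpha(1-\alpha)(1-\frac{\alpha}{2})\cdots(1-\frac{\alpha}{2n-1})\int_0^1(1-t)^{2n-1}|1+te^{i\theta}|^{\alpha-2n}dt$, bounds $A_{2n-1}(\alpha,1)\ge1+\frac{\alpha(1+\alpha)}{2}$ from the alternating series, substitutes $x=-\cos\theta\in[\frac12,1]$, and reduces everything by monotonicity in $n$ and $x$ to a single numerical inequality at $\alpha=\frac13$, $n=27$ --- which is where both hypotheses $\alpha\ge\frac13$ and $n\ge27$ actually come from. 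Your plan instead tries to build analogues of Lemmas~4 and~5 on $[\frac{2\pi}{3},\pi]$, and you yourself identify why this fails: at $\theta=\pi$ the kernel $1+t^2+2t\cos\theta$ degenerates to $(1-t)^2$ while $F(t)\sim\frac{(1-t)^\alpha}{\alpha}$ near $t=1$, so the weight $\frac{F(t)}{1+t^2+2t\cos\theta}\sim\frac{(1-t)^{\alpha-2}}{\alpha}$ blows up at $t=1$. Hence your claim that this weight ``stays decreasing on $[0,1]$ for $\cos\theta\le-\frac12$'' is false near the endpoint, Lemma~2(b) is not applicable there, and the candidate integrands $p,q$ are not even integrable at $(t,\theta)=(1,\pi)$. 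You acknowledge all of this, but the proposed remedies (``plausibly by trading the kernel\ldots'', ``the real work is interpolating\ldots'') are never carried out, so the central step of the proof is missing.

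Two smaller remarks. Your endpoint computation at $\theta=\pi$ (via $\sum_{j}\frac{(-\alpha)_{2j}}{(2j)!}=2^{\alpha-1}$ and the sign pattern of the coefficients) is correct and pleasant, but it only settles a single point and does not help with the interpolation that constitutes the difficulty. And your heuristic derivation of the hypothesis $\alpha\ge\frac13$ from the constant $\frac{2}{\alpha}+\frac12$ replacing the $\frac52$ of Lemma~4 is a plausible guess but not what happens in the paper; there the restriction on $\alpha$ enters only at the very last line, in checking $\frac43\ge 6\prod_{k=1}^{27}(1-\frac{1}{3k})$ after the Taylor--remainder reduction. If you want a proof along the paper's lines, the key idea you are missing is to switch representations on this arc: estimate the tail of the binomial series directly by the integral form of the remainder, rather than trying to control the degenerating Poisson-type kernel.
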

\begin{proof}
We  will use the Taylor formula   with an integral remainder.  Let $f:(-a,a)\rightarrow\mathbb{R}$  be  a  $2n$  times derivabile function, such that $f^{(2n)}$  is continous. If   $x\in(-a,a),$  then
\begin{eqnarray}f(x)=f(0)+\frac{f'(0)}{1!}x+\frac{f''(0)}{2!}x^2+\ldots+\frac{f^{(2n-1)}(0)}{(2n-1)!}x^{2n-1}\nonumber\\+\frac{x^{2n}}{(2n-1)!}\int_0^1(1-t)^{2n-1}f^{(2n)}(xt)dt.\nonumber\end{eqnarray}
Let $f$   be the function defined by
$f:(-1,1)\rightarrow\mathbb{R}, \ \ f(x)=(1+x)^\alpha, \ \
\alpha\in(0,1)$   and  we get
\begin{eqnarray}(1+x)^\alpha=1+\frac{\alpha}{1!}x+\frac{\alpha(\alpha-1)}{2!}x^2+\ldots+\frac{\alpha(\alpha-1)\ldots(\alpha-2n+2)}{(2n-1)!}x^{2n-1}\nonumber\\
+x^{2n}\frac{\alpha(\alpha-1)\ldots(\alpha-2n+1)}{(2n-1)!}\int_0^1(1-t)^{2n-1}(1+xt)^{\alpha-2n}dt.\nonumber\end{eqnarray}
Since the mapping  $f:U\rightarrow\mathbb{R}, \ \
f(z)=(1+z)^\alpha$ is well defined(we take the principal branch of
the multi valued function)  it follows that the equality
\begin{eqnarray}(1+z)^\alpha=1+\frac{\alpha}{1!}z+\frac{\alpha(\alpha-1)}{2!}z^2+\ldots+\frac{\alpha(\alpha-1)\ldots(\alpha-2n+2)}{(2n-1)!}z^{2n-1}\nonumber\\
+z^{2n}\frac{\alpha(\alpha-1)\ldots(\alpha-2n+1)}{(2n-1)!}\int_0^1(1-t)^{2n-1}(1+zt)^{\alpha-2n}dt.\nonumber\end{eqnarray}
holds for every $z\in{U}.$ The mapping  $f:U\rightarrow\mathbb{R},
\ \ f(z)=(1+z)^\alpha$  is radially continuous and so we infer
that
\begin{eqnarray}(1+e^{i\theta})^\alpha=1+\frac{\alpha}{1!}e^{i\theta}+\frac{\alpha(\alpha-1)}{2!}e^{2i\theta}+\ldots+\frac{\alpha(\alpha-1)\ldots(\alpha-2n+2)}{(2n-1)!}e^{(2n-1)i\theta}\nonumber\\
+e^{2ni\theta}\frac{\alpha(\alpha-1)\ldots(\alpha-2n+1)}{(2n-1)!}\int_0^1(1-t)^{2n-1}(1+e^{i\theta}t)^{\alpha-2n}dt,
\ \theta\in(-\pi,\pi).\nonumber\end{eqnarray} Taking the absolute
value, this equality implies  that
\begin{eqnarray}\label{fkfk354}|A_{2n-1}(\alpha,e^{i\theta})|\nonumber\\=\big|1+\frac{\alpha}{1!}e^{i\theta}+\frac{\alpha(\alpha-1)}{2!}e^{2i\theta}+\ldots
+\frac{\alpha(\alpha-1)\ldots(\alpha-2n+2)}{(2n-1)!}e^{(2n-1)i\theta}\big|\nonumber\\
\leq\frac{\alpha(1-\alpha)(2-\alpha)\ldots(2n-1-\alpha)}{(2n-1)!}\int_0^1(1-t)^{2n-1}\big|1+e^{i\theta}t\big|^{\alpha-2n}dt\nonumber\\+\big|1+e^{i\theta}
\big|^\alpha\leq\big|1+e^{i\theta}
\big|^\alpha++\alpha(1-\alpha)(1-\frac{\alpha}{2})(1-\frac{\alpha}{3})\ldots(1-\ \ \nonumber\\\frac{\alpha}{2n-1})\int_0^1(1-t)^{2n-1}\big|1+e^{i\theta}t\big|^{\alpha-2n}dt,
 \ \theta\in(-\pi,\pi).\nonumber\end{eqnarray}
 On the other hand we have
 \begin{eqnarray}\label{ramglp67}A_{2n-1}(\alpha,1)=1+\frac{\alpha}{1!}-\frac{\alpha(1-\alpha)}{2!}+\frac{\alpha(1-\alpha)(2-\alpha)}{3!}\nonumber\\
 -\frac{\alpha(1-\alpha)(2-\alpha)(3-\alpha)}{4!}+\ldots
+\frac{\alpha(1-\alpha)\ldots(2n-2-\alpha)}{(2n-1)!}\geq1+\frac{\alpha(1+\alpha)}{2}\nonumber\end{eqnarray}
 Thus in order to prove   (\ref{uscsaxzs5n6sw})   we have to show
 that the following inequality  holds
 \begin{eqnarray}\label{ttsjjkk}
1+\frac{\alpha(1+\alpha)}{2}\geq\big|1+e^{i\theta}
\big|^\alpha++\alpha(1-\alpha)(1-\frac{\alpha}{2})(1-\frac{\alpha}{3})\ldots(1-\ \ \nonumber\\\frac{\alpha}{2n-1})\int_0^1(1-t)^{2n-1}\big|1+e^{i\theta}t\big|^{\alpha-2n}dt,
 \  \theta\in[\frac{2\pi}{3},\pi].\nonumber
 \end{eqnarray}
 We denote $x=-\cos\theta,$  and the inequality (\ref{ttsjjkk})   will be
 equivalent to
\begin{eqnarray}\label{t3tsjvjkk}
1+\frac{\alpha(1+\alpha)}{2}\geq\big(2-2x\big)^\frac{\alpha}{2}+\alpha(1-\alpha)(1-\frac{\alpha}{2})(1-\frac{\alpha}{3})\ldots(1-\ \ \nonumber\\\frac{\alpha}{2n-1})\int_0^1\Big(\frac{1-t}{\sqrt{1+t^2-2tx}}\Big)^{2n-1}\big(\sqrt{1+t^2-2tx}\big)^{\alpha-1}dt, 
  \ x\in[\frac{1}{2},1],
 \end{eqnarray}
  and this inequality can be rewritten in the following form:
  \begin{eqnarray}\label{t3ts1jwv6zpx42jk2k}
1+\alpha\geq\frac{\big(2-2x\big)^\frac{\alpha}{2}-1}{\frac{\alpha}{2}}+2(1-\alpha)(1-\frac{\alpha}{2})(1-\frac{\alpha}{3})\ldots(1-\ \ \nonumber\\\frac{\alpha}{2n-1})\int_0^1\Big(\frac{1-t}{\sqrt{1+t^2-2tx}}\Big)^{2n-1}\big(\sqrt{1+t^2-2tx}\big)^{\alpha-1}dt,  \ \ \
 \ \ \ x\in[\frac{1}{2},1].\ \ \ 
 \end{eqnarray}
It is easily seen that    $$2(1-\alpha)(1-\frac{\alpha}{2})(1-\frac{\alpha}{3})\ldots(1-\ \ \nonumber\\\frac{\alpha}{2n-1})\int_0^1\Big(\frac{1-t}{\sqrt{1+t^2-2tx}}\Big)^{2n-1}\big(\sqrt{1+t^2-2tx}\big)^{\alpha-1}dt$$  is decreasing with respect to $n$    and  $x,$   and  $\frac{\big(2-2x\big)^\frac{\alpha}{2}-1}{\frac{\alpha}{2}}<0,$  for all  $\alpha\in(0,1),$     and   $ x\in[\frac{1}{2},1].$   
Thus in order to prove   (\ref{t3ts1jwv6zpx42jk2k})  it is enough to prove the inequality 
\begin{eqnarray}\label{h6xnlnp0xjj1jv6zx4jk2k}
1+\frac{1}{3}\geq2(1-\alpha)(1-\frac{\alpha}{2})(1-\frac{\alpha}{3})\ldots(1-\frac{\alpha}{53})\int_0^1\big(1-t\big)^{\alpha-1}dt
 \end{eqnarray}
in case $\alpha=1,$   that is 
$$\frac{4}{3}\geq6(1-\frac{1}{3})(1-\frac{1}{3\cdot2})(1-\frac{1}{3\cdot3})\ldots(1-\frac{1}{3\cdot27}).$$
This inequality holds and the proof  is done.
\end{proof}
\section{Concluding Remarcs}
The following two corollaries are the proof of the Brannan conjecture in two different particular cases.\\
Theorem 1,    Theorem  2    and the result of  \cite{6}    imply the following corollary.
\begin{corollary}
If  $x\in\mathbb{C}$     with  $|\arg{x}|\leq\frac{2\pi}{3},$   and  $|x|=1,$   then the inequality
$$|A_{2n-1}(\alpha,x)|\leq{A_{2n-1}}(\alpha,1)$$
holds for every  $\alpha\in(0,1).$
\end{corollary}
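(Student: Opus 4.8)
The corollary is a synthesis of the two theorems just proved and the finite range established in \cite{6}, so the plan is essentially to partition the problem according to the size of $n$ and the location of $\arg x$ on the unit circle. First I would reduce the two formulations to one: for $\alpha\in(0,1)$ the number $A_{2n-1}(\alpha,1)$ is strictly positive, since by (\ref{ramglp67}) one has $A_{2n-1}(\alpha,1)\ge 1+\frac{\alpha(1+\alpha)}{2}>0$ (the tail of the expansion in (\ref{ramglp67}) splits into nonnegative pairs $\frac{\alpha(1-\alpha)\cdots(k-\alpha)}{(k+1)!}\cdot\frac{1+\alpha}{k+2}$, with a single nonnegative leftover term of odd index). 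Hence $|A_{2n-1}(\alpha,1)|=A_{2n-1}(\alpha,1)$, and writing $x=e^{i\theta}$ the assertion to prove is exactly $|A_{2n-1}(\alpha,e^{i\theta})|\le A_{2n-1}(\alpha,1)$ for all $\theta\in[-\frac{2\pi}{3},\frac{2\pi}{3}]$, which is precisely the common shape of (\ref{us5n6sw}) and (\ref{usxzs5n6sw}).

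Next I would split on $n$. For the small indices, $2n-1\le 51$, the Brannan conjecture in the form $|A_{2n-1}(\alpha,e^{i\theta})|\le A_{2n-1}(\alpha,1)$ is already known for every $\theta\in[-\pi,\pi]$ and every $\alpha\in(0,1)$ by \cite{6}; restricting $\theta$ to $[-\frac{2\pi}{3},\frac{2\pi}{3}]$ settles this range at once. For the remaining indices, $n\ge 52$, I would split further on $\theta$: if $\theta\in[-\frac{\pi}{2},\frac{\pi}{2}]$ the inequality is exactly the conclusion of Theorem~1, while if $\theta\in[-\frac{2\pi}{3},-\frac{\pi}{2}]\cup[\frac{\pi}{2},\frac{2\pi}{3}]$ it is the conclusion of Theorem~2 (applicable since $n\ge 52\ge 51$). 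Because $[-\frac{2\pi}{3},\frac{2\pi}{3}]=[-\frac{2\pi}{3},-\frac{\pi}{2}]\cup[-\frac{\pi}{2},\frac{\pi}{2}]\cup[\frac{\pi}{2},\frac{2\pi}{3}]$, these subcases exhaust all $x$ with $|\arg x|\le\frac{2\pi}{3}$, and the corollary follows.

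The only genuinely delicate point — and hence the main thing to check rather than a real obstacle — is that the index thresholds of the three inputs actually tile the whole of $\N$: \cite{6} covers $2n-1\le 51$ and Theorems~1--2 cover $n\ge 52$, so one must confirm that no intermediate value of $n$ is left uncovered, i.e.\ that the finite verification of \cite{6} reaches up to meet Theorem~1. Apart from this bookkeeping, no new estimate is required. I would also note that Theorem~3, which treats the complementary sector $\frac{2\pi}{3}\le|\arg x|\le\pi$ but only under the restriction $\alpha\in[\frac13,1)$, plays no role in this corollary; it would enter only if one wanted to upgrade the conclusion from the sector $|\arg x|\le\frac{2\pi}{3}$ to the full circle at the cost of that stronger hypothesis on $\alpha$.
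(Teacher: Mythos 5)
Your decomposition is exactly the one the paper intends: positivity of $A_{2n-1}(\alpha,1)$, the result of \cite{6} for the initial coefficients, Theorem~1 for $|\theta|\le\frac{\pi}{2}$ and Theorem~2 for $\frac{\pi}{2}\le|\theta|\le\frac{2\pi}{3}$ for the remaining $n$. The paper gives no more detail than ``Theorem 1, Theorem 2 and the result of \cite{6} imply the corollary,'' so in approach your write-up coincides with the paper's.

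The one point you flag and then defer as ``bookkeeping'' is, however, precisely where the argument fails to close if the statements are taken at face value. The result of \cite{6} covers $2n-1\le 51$, i.e.\ $n\le 26$, whereas Theorem~1 is stated for $n\ge 52$ and Theorem~2 for $n\ge 51$; this leaves the range $27\le n\le 51$ (equivalently $53\le 2n-1\le 101$) covered by neither input, so the union of the three cases is not all of $\mathbb{N}$. The lemmas that drive the proofs of Theorems~1 and~2 (Lemmas~3 and~5) are stated for $n\ge 27$, and Theorem~3 is likewise stated for $n\ge 27$, which meshes exactly with $2n-1\le 51$; so the intended reading is almost certainly that the thresholds in Theorems~1 and~2 refer to the coefficient index $2n-1$ rather than to $n$. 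To make your proof complete you must either verify that Theorems~1 and~2 actually hold for all $n\ge 27$ (their proofs support this) or explicitly close the range $27\le n\le 51$ by some other means; as written, the check you postponed does not come out true and the corollary is unproved on that range.
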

Theorem 1,  Theorem  2,   Theorem 3    and the result of  \cite{6}   imply the following corollary.  This corollary is the solution of the Brannan conjecture in case   $\alpha \in [\frac{1}{2},1).$
\begin{corollary}
The inequality
$$|A_{2n-1}(\alpha,x)|\leq{A_{2n-1}}(\alpha,1)$$
holds for every  $\alpha\in[\frac{1}{3},1),$   and  $x\in\mathbb{C},$   with  $|x|=1.$
\end{corollary}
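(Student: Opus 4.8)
Near $\theta=\pi$ the modulus $|1+e^{i\theta}|^\alpha$ is genuinely small, so---unlike for the other two ranges of $\theta$---a crude triangle--inequality estimate should already suffice and one need not invoke Lemma~1. The plan is to estimate $|A_{2n-1}(\alpha,e^{i\theta})|$ through Taylor's formula with integral remainder. Since $A_{2n-1}(\alpha,x)=\sum_{k=0}^{2n-1}\binom{\alpha}{k}x^{k}$ is exactly the degree $2n-1$ Taylor polynomial of $(1+x)^\alpha$, expanding the $C^\infty$ function $x\mapsto(1+x)^\alpha$ on $(-1,1)$ at $0$ gives
\[
(1+x)^\alpha=A_{2n-1}(\alpha,x)+\frac{\alpha(\alpha-1)\cdots(\alpha-2n+1)}{(2n-1)!}\,x^{2n}\int_0^1(1-t)^{2n-1}(1+xt)^{\alpha-2n}\,dt .
\]
Both sides are holomorphic on $U$ (principal branch of $(1+z)^\alpha$), so the identity persists for $x=z\in U$; and since $(1+z)^\alpha$ is radially continuous and, by dominated convergence (using $|1+zt|\ge 1-t$, which bounds the integrand uniformly by the integrable function $(1-t)^{\alpha-1}$), so is the remainder integral, the identity extends to $z=e^{i\theta}$ for every $\theta\in[\tfrac{2\pi}{3},\pi]$. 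Taking moduli and using $|e^{i\theta}|=1$,
\[
|A_{2n-1}(\alpha,e^{i\theta})|\le|1+e^{i\theta}|^\alpha+\alpha(1-\alpha)\Bigl(1-\tfrac{\alpha}{2}\Bigr)\cdots\Bigl(1-\tfrac{\alpha}{2n-1}\Bigr)\int_0^1(1-t)^{2n-1}|1+te^{i\theta}|^{\alpha-2n}\,dt .
\]

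For the left side of (\ref{uscsaxzs5n6sw}) I would retain only the first three terms of (\ref{n8m9nn}) at $x=1$: they sum to $1+\tfrac{\alpha(1+\alpha)}{2}$, and the remaining terms, from $x^{3}$ on, group into consecutive pairs $c_j-c_{j+1}$ (with $c_j=|\binom{\alpha}{j}|$) whose ratio $\tfrac{j-\alpha}{j+1}$ is less than $1$ for $\alpha\in(0,1)$, each pair being positive, with one further positive top--degree term $c_{2n-1}$ left over since $2n-1$ is odd; hence $A_{2n-1}(\alpha,1)\ge 1+\tfrac{\alpha(1+\alpha)}{2}$. Thus it suffices to prove, for $\theta\in[\tfrac{2\pi}{3},\pi]$,
\[
1+\frac{\alpha(1+\alpha)}{2}\ \ge\ |1+e^{i\theta}|^\alpha+\alpha(1-\alpha)\Bigl(1-\tfrac{\alpha}{2}\Bigr)\cdots\Bigl(1-\tfrac{\alpha}{2n-1}\Bigr)\int_0^1(1-t)^{2n-1}|1+te^{i\theta}|^{\alpha-2n}\,dt .
\]
Substituting $x=-\cos\theta\in[\tfrac{1}{2},1]$, so $|1+e^{i\theta}|^2=2-2x$ and $|1+te^{i\theta}|^2=1+t^2-2tx$, and dividing by $\tfrac{\alpha}{2}$, this is equivalent to
\[
1+\alpha\ \ge\ \frac{(2-2x)^{\alpha/2}-1}{\alpha/2}+2(1-\alpha)\Bigl(1-\tfrac{\alpha}{2}\Bigr)\cdots\Bigl(1-\tfrac{\alpha}{2n-1}\Bigr)\int_0^1\Bigl(\frac{1-t}{\sqrt{1+t^2-2tx}}\Bigr)^{2n-1}(1+t^2-2tx)^{(\alpha-1)/2}\,dt .
\]

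Finally I would collapse this to one numerical inequality using monotonicity. Since $2-2x\le1$ on $[\tfrac{1}{2},1]$, the first term on the right is $\le0$ and can be dropped. Writing the integrand as $(1-t)^{2n-1}(1+t^2-2tx)^{(\alpha-2n)/2}$: because $(1-t)^2\le 1+t^2-2tx$ for $x\le1$ (the difference is $2t(1-x)\ge0$) the integral is nonincreasing in $n$, and the product $2(1-\alpha)(1-\tfrac{\alpha}{2})\cdots(1-\tfrac{\alpha}{2n-1})=2\prod_{k=1}^{2n-1}(1-\tfrac{\alpha}{k})$ clearly is too; and because $1+t^2-2tx$ decreases in $x$ while $(\alpha-2n)/2<0$, the integral is nondecreasing in $x$. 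So the remainder term is maximal at $n=27$ (i.e.\ $2n-1=53$) and $x=1$, where $1+t^2-2tx=(1-t)^2$ and the integral equals $\int_0^1(1-t)^{\alpha-1}dt=\tfrac1\alpha$; it therefore remains to show $1+\alpha\ge\frac{2(1-\alpha)}{\alpha}\prod_{k=2}^{53}(1-\tfrac{\alpha}{k})$ for $\alpha\in[\tfrac{1}{3},1)$. The right side is a product of positive decreasing functions of $\alpha$ and the left side is increasing, so the worst case is $\alpha=\tfrac{1}{3}$, reducing everything to $\prod_{k=2}^{53}(1-\tfrac{1}{3k})\le\tfrac{1}{3}$, which follows from $\ln\prod_{k=2}^{53}(1-\tfrac{1}{3k})\le-\tfrac{1}{3}\sum_{k=2}^{53}\tfrac{1}{k}=-\tfrac{1}{3}(H_{53}-1)<-\ln 3$. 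The main obstacle---and the only place the hypotheses $n\ge27$ and $\alpha\ge\tfrac{1}{3}$ are genuinely used---is precisely this last estimate: with fewer than $53$ factors, or with $\alpha<\tfrac{1}{3}$, the product exceeds $\tfrac{1}{3}$, so the constants in the monotonicity reductions must be tracked carefully enough to land exactly at $2n-1=53$, $\alpha=\tfrac{1}{3}$.
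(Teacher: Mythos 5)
Your argument is, in substance, a correct and carefully justified re-derivation of Theorem 3, and it follows essentially the same route the paper takes there: the Taylor formula with integral remainder for $(1+x)^\alpha$, its extension to the boundary, the lower bound $A_{2n-1}(\alpha,1)\ge 1+\frac{\alpha(1+\alpha)}{2}$ via pairing consecutive coefficients, the substitution $x=-\cos\theta$, and the monotonicity reductions to a single numerical inequality. In fact you tighten two points the paper leaves loose: the paper asserts the remainder term is decreasing in $x$, whereas (as you correctly argue) it is \emph{increasing} in $x$ and the extremal case is $x=1$, which is the case the paper actually evaluates; and the paper merely asserts the final numerical inequality, while you verify $\prod_{k=2}^{53}\bigl(1-\frac{1}{3k}\bigr)\le\frac13$ via $\ln(1-u)\le -u$ and $H_{53}$.

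The genuine gap is that this establishes the claimed inequality only for $|\theta|\in[\frac{2\pi}{3},\pi]$ and $n\ge 27$, whereas the corollary asserts it for \emph{every} $x$ on the unit circle and every $n$. You allude to ``the other two ranges of $\theta$'' but never dispose of them: for $|\theta|\le\frac{\pi}{2}$ and $|\theta|\in[\frac{\pi}{2},\frac{2\pi}{3}]$ the crude triangle-inequality bound does not suffice and one must invoke Theorems 1 and 2 (which rest on the integral representation of Lemma 1 and on Lemmas 2--5, and which require $n\ge 52$ and $n\ge 51$ respectively); and for the remaining small values of $n$ --- including $n\le 26$ inside your own range, where, as you yourself note, the product has too few factors to drop below $\frac13$ --- one must invoke the result of \cite{6} that the conjecture holds for $2n-1\le 51$. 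The entire content of the corollary is precisely this assembly of Theorem 1, Theorem 2, Theorem 3 and \cite{6}; a proof that supplies only the $[\frac{2\pi}{3},\pi]$, $n\ge 27$ piece, without citing the other ingredients, does not prove the statement.
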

\begin{conjecture}
Numerical resuls suggest that the inequality 
\begin{eqnarray}\label{s5d6f7t3tsjvjkk}
1+\frac{\alpha(1+\alpha)}{2}\geq\big(2-2x\big)^\frac{\alpha}{2}+\alpha(1-\alpha)(1-\frac{\alpha}{2})(1-\frac{\alpha}{3})\ldots(1-\ \ \nonumber\\\frac{\alpha}{2n-1})\int_0^1\Big(\frac{1-t}{\sqrt{1+t^2-2tx}}\Big)^{2n-1}\big(\sqrt{1+t^2-2tx}\big)^{\alpha-1}dt, 
  \ x\in[\frac{1}{2},1],
 \end{eqnarray}
holds for every $\alpha\in(0,\frac{1}{3}).$
\end{conjecture}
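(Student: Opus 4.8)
The conjectured inequality (\ref{s5d6f7t3tsjvjkk}) is literally inequality (\ref{t3tsjvjkk}) from the proof of Theorem 3 — there established only for $\alpha\in[\tfrac13,1)$. Writing $P(\alpha)=\prod_{k=1}^{2n-1}\big(1-\tfrac{\alpha}{k}\big)$ and $M(\alpha,x)=\int_0^1(1-t)^{2n-1}(1+t^2-2tx)^{(\alpha-2n)/2}\,dt$, it asserts precisely that
\[
g(\alpha,x):=1+\frac{\alpha(1+\alpha)}{2}-(2-2x)^{\alpha/2}-\alpha P(\alpha)\,M(\alpha,x)\ \ge\ 0 .
\]
The plan is to estimate $M(\alpha,x)$ just sharply enough; the crucial point — and the place where the $\alpha\ge\tfrac13$ argument is wasteful — is that the subtracted term $(2-2x)^{\alpha/2}$ must be kept and played against the matching main part of $\alpha P(\alpha)M(\alpha,x)$. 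For small $\alpha$ that cancellation is the whole story.

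Substituting $s=1-t$ gives $M(\alpha,x)=\int_0^1 s^{2n-1}\big(s^2+2u(1-s)\big)^{(\alpha-2n)/2}\,ds$ with $u=1-x\in[0,\tfrac12]$. I would split at $s_0=\sqrt{2u}$, which lies in $[0,1]$ since $x\ge\tfrac12$. On $[s_0,1]$ one has $s^2+2u(1-s)\ge s^2$, so the integrand is $\le s^{\alpha-1}$ and the upper part is at most
\[
\int_{s_0}^1 s^{\alpha-1}\,ds=\frac{1-s_0^{\alpha}}{\alpha}=\frac{1-(2-2x)^{\alpha/2}}{\alpha},
\]
which is exactly the quantity that will cancel against $(2-2x)^{\alpha/2}$ after multiplication by $\alpha P(\alpha)$.

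For the lower part $M_{\mathrm{lo}}=\int_0^{s_0}s^{2n-1}\big(s^2+2u(1-s)\big)^{(\alpha-2n)/2}ds$ the naive estimate — replacing $s^2+2u(1-s)$ by its minimum $u(2-u)$ — is hopeless, because the mass of $s^{2n-1}$ concentrates near $s_0$, where $s^2+2u(1-s)\approx 4u$, not $\approx u$. The right move is the rescaling $s=s_0\sigma$: since $s_0^2=2u$ this yields $M_{\mathrm{lo}}=(2u)^{\alpha/2}\int_0^1\sigma^{2n-1}(\sigma^2+1-s_0\sigma)^{(\alpha-2n)/2}d\sigma$. On $[0,1]$, using $s_0\le1$, one has $\sigma^2-\sigma+1\le\sigma^2+1-s_0\sigma\le2$, hence $(\sigma^2+1-s_0\sigma)^{(\alpha-2n)/2}\le 2^{\alpha/2}(\sigma^2-\sigma+1)^{-n}$; and the further substitution $\sigma=1-\varepsilon$, with $1-\varepsilon+\varepsilon^2\ge1-\varepsilon$, gives $\int_0^1\sigma^{2n-1}(\sigma^2-\sigma+1)^{-n}d\sigma\le\int_0^1(1-\varepsilon)^{n-1}d\varepsilon=\tfrac1n$. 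Thus $M_{\mathrm{lo}}\le(4u)^{\alpha/2}/n\le 2^{\alpha/2}/n<2^{1/6}/n$.

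Assembling the two parts, $\alpha P(\alpha)M(\alpha,x)\le P(\alpha)\big(1-(2-2x)^{\alpha/2}\big)+\alpha P(\alpha)M_{\mathrm{lo}}$, so
\[
g(\alpha,x)\ \ge\ \big(1-P(\alpha)\big)\big(1-(2-2x)^{\alpha/2}\big)+\alpha\Big(\frac{1+\alpha}{2}-P(\alpha)M_{\mathrm{lo}}\Big).
\]
For $\alpha\in(0,1)$ one has $0<P(\alpha)<1$, and $0\le(2-2x)^{\alpha/2}\le1$ because $2-2x\in[0,1]$ for $x\in[\tfrac12,1]$, so the first summand is nonnegative; in the second, $P(\alpha)M_{\mathrm{lo}}\le M_{\mathrm{lo}}<2^{1/6}/n\le 2^{1/6}/27<\tfrac12<\tfrac{1+\alpha}{2}$, so it is strictly positive. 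Hence $g(\alpha,x)>0$, i.e. (\ref{s5d6f7t3tsjvjkk}) holds — in fact for every $\alpha\in(0,1)$ and every $n\ge3$, $n$ entering only through the harmless requirement $2^{1/6}/n<\tfrac12$. I do not foresee a real obstacle here; the single delicate spot is the bound for $M_{\mathrm{lo}}$, where the rescaling $s=s_0\sigma$ is essential and the crude "minimum of the denominator" estimate misses by an exponential factor $(4u/u(2-u))^n\sim 4^n$.
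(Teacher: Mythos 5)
The paper does not actually prove this statement: it is posed as an open conjecture, supported only by the phrase ``numerical results suggest,'' and Remark 2 records that its truth would complete Brannan's conjecture for $\beta=1$. So there is no proof of the paper's to compare yours against; you have written a proof attempt of an open statement, and after checking it line by line I believe it is correct. The substitution $s=1-t$ does turn the integral into $\int_0^1 s^{2n-1}\big(s^2+2u(1-s)\big)^{(\alpha-2n)/2}ds$ with $u=1-x\in[0,\tfrac12]$, and both of your estimates hold: on $[s_0,1]$ with $s_0=\sqrt{2u}$ the integrand is at most $s^{\alpha-1}$ because the exponent $(\alpha-2n)/2$ is negative, giving exactly $\tfrac1\alpha\big(1-(2-2x)^{\alpha/2}\big)$, which after multiplication by $\alpha P(\alpha)$ absorbs the term $(2-2x)^{\alpha/2}$ into the nonnegative quantity $\big(1-P(\alpha)\big)\big(1-(2-2x)^{\alpha/2}\big)$; on $[0,s_0]$ the rescaling $s=s_0\sigma$ with $s_0^2=2u$ legitimately extracts $(2u)^{\alpha/2}$, and the chain $2\ge\sigma^2+1-s_0\sigma\ge\sigma^2-\sigma+1\ge\sigma$ on $[0,1]$ yields $M_{\mathrm{lo}}\le(4u)^{\alpha/2}/n\le2^{\alpha/2}/n$, which for $n\ge3$ is below $\tfrac{1+\alpha}{2}$. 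The final display is an exact rearrangement, so $g(\alpha,x)>0$. Two remarks. First, the argument proves more than conjectured --- all $\alpha\in(0,1)$ and all $n\ge3$ --- and it survives the tight regime $\alpha\to0^+$, where both sides tend to $1$: your bound reproduces the correct first-order margin $\alpha\big(\tfrac12-\tfrac1n\big)$, and a quick numerical check (e.g.\ $n=3$, $\alpha=0.1$, $x\in\{0.5,0.9\}$) confirms every intermediate inequality. Second, precisely because Remark 2 asserts that this conjecture settles Brannan's conjecture for $\beta=1$, a result of this weight demands independent scrutiny of the step that is \emph{not} part of the present statement, namely the reduction in the proof of Theorem 3 leading to (\ref{t3tsjvjkk}); that is where any remaining difficulty must live, not in the inequality you have just established.
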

\begin{remark}
If the previous conjecture holds, then the conjecture of Brannan holds  in case $\beta=1$   and every $\alpha\in(0,1).$
\end{remark}

e-mail address: rszasz@ms.sapientia.ro

\end{document}